\renewcommand{\phi}{\varphi}
\DeclarePairedDelimiter{\floor}{\lfloor}{\rfloor}
\newtheorem{lemma}{Lemma}[section]
\newtheorem{theorem}[lemma]{Theorem}
\newtheorem{corollary}[lemma]{Corollary}
\newtheorem{question}[lemma]{Question}
\newtheorem{proposition}[lemma]{Proposition}
\theoremstyle{definition}
\newtheorem{example}[lemma]{Example}
\newtheorem{definition}[lemma]{Definition}
\renewcommand{\subset}{\subseteq}
\newcommand{\Plarge}{\mathcal{P}_{\text{large}}}
\newcommand{\Pmed}{\mathcal{P}_{\text{med}}}
\newcommand{\Psmall}{\mathcal{P}_{\text{small}}}
\newcommand{\Vsmall}{\mathbf{V}_{\text{small}}}
\newcommand{\Vmed}{\mathbf{V}_{\text{med}}}
\newcommand{\Vlarge}{\mathbf{V}_{\text{large}}}
\newcommand{\Psq}{\mathcal{P}_{\square}}
\newcommand{\Pno}{\mathcal{P}_{\not\square}}
\date{\small{Department of Mathematics, University of California, Los Angeles (UCLA) 
\\ 
e-mail: \href{mailto:rushil@math.ucla.edu}{rushil@ucla.edu}}}
\title{\textbf{Sharp Bounds for Sets with Distinct Subset Products}}
\author{Rushil Raghavan}
\begin{document}
	
	\maketitle
	
	\begin{abstract} Let $A\subset [N]$ be such that for any pair of distinct subsets $B,C\subset A$, the products $\prod_{b\in B}b$ and $\prod_{c\in C}c$ are distinct. We prove that $|A|\leq \pi(N)+\pi(N^{1/2})+o(\pi(N^{1/2}))$, where $\pi$ is the prime counting function, answering a question of Erd\H{o}s. 
	\end{abstract} 	
	
\section{Introduction}
For $N\in\mathbb{N}$, let $f(N)$ denote the size of the largest subset $A$ of $[N]$ such that for any pair of distinct subsets $B,C\subset A$, $\prod_{b\in B}b \neq \prod_{c\in C}c$. We say that a set $A$ satisfying these conditions has \textit{distinct subset products}. Erd\H{o}s \cite{E1} initiated the study of the quantity $f(N)$, proving\footnote{See Definition \ref{asymptotic} for the asymptotic notation used in this paper.}
\[f(N) \leq  \pi(N) + O(\pi(N^{1/2})),\]
where $\pi(x) = |\{n\leq x:n\text{ is prime}\}|$ is the prime counting function.

He also produced the following example, establishing that the above bound is sharp up to the implicit constant:
\begin{example}\label{firstexmp} Let $A = \{p\leq N:p\text{ is prime}\}\cup \{p^2: p\text{ is prime and }p\leq N^{1/2}\}.$ Then $A$ has distinct subset products, so $f(N) \geq |A|=\pi(N)+\pi(N^{1/2})$.
\end{example}

He then asked \cite{E1}, \cite{E2} whether this estimate is optimal up to lower order terms. This problem is also listed at \url{https://www.erdosproblems.com/795}.
\begin{question}[Erd\H{o}s \#795]\label{mainconj} Is $f(N) = \pi(N)+\pi(N^{1/2})+o(\pi(N^{1/2}))?$
\end{question}
We answer this question affirmatively:
\begin{theorem}\label{mainthm} $f(N)  = \pi(N)+\pi(N^{1/2}) +O(N^{5/12})$.
\end{theorem}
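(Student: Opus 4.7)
The plan is to prove the matching upper bound, since the lower bound is provided by Example~\ref{firstexmp}. I would write $A = A_{\text{pr}} \sqcup A_{\square} \sqcup A^*$, where $A_{\text{pr}}$ consists of the primes in $A$, $A_{\square}$ of the squares $p^2 \in A$ with $p \leq N^{1/2}$ prime, and $A^*$ is the remainder. Since $|A_{\text{pr}}| \leq \pi(N)$ and $|A_{\square}| \leq \pi(N^{1/2})$, the goal reduces to showing $|A^*| = O(N^{5/12+o(1)})$. Every $n \leq N$ has at most one prime factor exceeding $N^{1/2}$, so I would split $A^* = A_1 \sqcup A_2$, where $A_1$ consists of composites with a prime factor $> N^{1/2}$ and $A_2$ consists of $N^{1/2}$-smooth composites not of the form $p^2$.

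For $A_1$, write each $n = p \cdot s$ with $p > N^{1/2}$ prime and $s \in [2, N/p)$ a $N^{1/2}$-smooth integer; let $B_p = \{s : ps \in A\}$, so that $|A_1| = \sum_p |B_p \setminus \{1\}|$. The distinct subset products condition yields two main constraints. First, the bipartite graph on large primes $\times$ smooth integers with edges $(p,s) \leftrightarrow ps \in A$ is $K_{2,2}$-free: otherwise $\{p_1 s_1, p_2 s_2\}$ and $\{p_1 s_2, p_2 s_1\}$ would be distinct subsets of $A$ with equal product. Second and more strongly, for distinct large primes $p_1, p_2$, the ratio sets $B_{p_1}/B_{p_1} \setminus \{1\}$ and $B_{p_2}/B_{p_2} \setminus \{1\}$ must be disjoint in $\mathbb{Q}_{>0}$ -- otherwise a ratio equation $s_1/s_3 = s_4/s_2$ produces a forbidden rectangle. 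For $A_2$, one works with the exponent vectors in $\mathbb{Z}^{\Psmall}$ together with multiplicative-Sidon-type estimates on smooth numbers.

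To extract the exponent $5/12$, I would carry out a finer three-tier decomposition using the paper's prime ranges $\Plarge, \Pmed, \Psmall$ (with thresholds $N^\alpha, N^\beta$ to be optimized) together with the squareful/squarefree distinction captured by $\Psq$ and $\Pno$. For each cell of this classification I would invoke the most effective tool: ratio-set disjointness controlled against the universe of smooth rationals (via de Bruijn--Hildebrand counts of $N^{1/2}$-smooth numbers), K\H{o}v\'ari--S\'os--Tur\'an for the $K_{2,2}$-free incidence graph on the medium range, and within-fiber multiplicative-Sidon constraints limiting each $|B_p|$. Summing the contributions over cells and optimizing $\alpha, \beta$ should yield the exponent $5/12$.

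The main obstacle is the quantitative balance. In isolation each constraint is much too weak: ratio-set disjointness alone only gives $|A^*| = O(N)$, while K\H{o}v\'ari--S\'os--Tur\'an alone gives $O(N^{5/4}/\log N)$. The sharp exponent must emerge from a careful coupling of these constraints across the three prime tiers, together with use of higher-order subset relations beyond the pairwise rectangle (for triples, quadruples, $\ldots$) and precise counting of the smooth rationals in which the ratio sets $B_p/B_p$ live. Identifying exactly the right coupled estimates in each cell and verifying that all error terms balance at the exponent $5/12$ is, I expect, the technical core of the argument.
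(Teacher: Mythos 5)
Your very first reduction does not work. From $|A_{\text{pr}}|\leq\pi(N)$ and $|A_\square|\leq\pi(N^{1/2})$ it does \emph{not} follow that proving $|A^*|=O(N^{5/12+o(1)})$ suffices, because that estimate on $A^*$ is false. The paper's Example 5.1 gives a set $A\subset[N]$ with distinct subset products containing all of $\Plarge$, all prime squares $p^2$ with $p\leq N^{1/2}$, and also $\pi(N^{1/2})-1$ products $pq$ of distinct primes $p,q\leq N^{1/2}$ lying along a spanning tree; for that $A$ your $A^*$ has $\approx 2N^{1/2}/\log N$ elements, far larger than $N^{5/12+o(1)}$. What your decomposition misses is the trade-off: when $A$ contains many composites of the form $pq$, it must contain correspondingly fewer of the primes $p$, $q$ themselves, so the three pieces of your decomposition cannot be bounded independently and then added. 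The quantity $\pi(N)+\pi(N^{1/2})$ is not ``number of primes plus number of prime squares''; it is an edges-versus-vertices count in a sparse graph, and any correct proof has to see the composites and the primes together.

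The paper captures this trade-off by encoding (most of) $A$ as the edge set of a ``prime factorization graph'' whose vertices are $\Plarge\cup\Pmed\cup\{1\}$, with a prime $p\in A$ recorded as the edge $\{1,p\}$ and a product $pq\in A$ as the edge $\{p,q\}$. The distinct-subset-products hypothesis forbids many short even circuits in this graph (your $K_{2,2}$-free observation is exactly the $4$-cycle case, and is a genuine fragment of the argument), and then a breadth-first-search girth lemma shows a graph on $n$ vertices with no short cycles has at most $(1+o(1))n$ edges. The vertex count is roughly $\pi(N)$ and, after accounting for degree-one large-prime vertices and for $\Psq$, this yields $|A|\leq\pi(N)+\pi(N^{1/2})+O(N^{5/12+o(1)})$. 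The exponent is $5/12=1/3+1/12$: the $1/3$ comes from the small-prime entropy bound (Proposition \ref{smallcontrol}, which you did not use) and the $1/12$ is the cycle-length threshold $N^{1/12}$ in the removal lemmas; it does not come from balancing smooth-number counts or K\H{o}v\'ari--S\'os--Tur\'an exponents. Your ratio-set and $K_{2,2}$-free ingredients are on the right track as local constraints, but without the global edges-vs-vertices bookkeeping supplied by a single girth estimate across all of $\Plarge\cup\Pmed$, they cannot recover the main term, let alone the error term.
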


In view of the above estimate, it is natural to ask for more precise information about the lower-order term. Erd\H{o}s also considered this in \cite{E1}, and speculated that a refinement of Example \ref{firstexmp} may be optimal. To be more specific, for $k\in\mathbb{N}$, let $g(k)$ be the smallest element of $\mathbb{N}$ for which there is an $E_k\subset [g(k)]$ of size $k$ such that for any distinct $F,F'\subseteq E_k$, $\sum_{n\in F}n\neq \sum_{n\in F'}n$. In other words, let $g(k)$ be the smallest possible maximal element in a set of size $k$ with distinct subset sums. Then, by setting 
\[A = \bigcup_{k=1}^\infty \bigcup_{n\in E_k}\{p^n:p\in (N^{1/g(k+1)},N^{1/g(k)}]\text{ is prime}\},\]
we see that the subset products of $A$ are distinct, and thus 
\[f(N)\geq \sum_{k=1}^\infty \pi(N^{1/g(k)}). \]
Since $g(1)=1$, $g(2)=2$, $g(3)=4$, and $g(4)=7$ (with $E_4=\{3,5,6,7\}$), Erd\H{o}s established $f(N)\geq \pi(N)+\pi(N^{1/2})+\pi(N^{1/4})+\pi(N^{1/7})$, and speculated that the above infinite sum may be best possible. However, we produce an example that improves upon this.
\begin{theorem}\label{lowerbound} $f(N)\geq \pi(N)+\pi(N^{1/2})+\frac{1}{3}\pi(N^{1/3})-O(1)$.
\end{theorem}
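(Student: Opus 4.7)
The plan is to construct an explicit set $A \subset [N]$ of size $\pi(N) + \pi(N^{1/2}) + \frac{1}{3}\pi(N^{1/3}) - O(1)$ with distinct subset products, improving on Example \ref{firstexmp}. I would start from the base set $A_0 = \{p \leq N : p \text{ prime}\} \cup \{p^2 : p \leq N^{1/2}, p \text{ prime}\}$ of size $\pi(N) + \pi(N^{1/2})$, which by Example \ref{firstexmp} already has distinct subset products, and augment it with approximately $\frac{1}{3}\pi(N^{1/3})$ additional elements in $[N]$.

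As a preliminary step I would characterize the integers $n \leq N$ that are \emph{not} subset products of $A_0$. A direct computation shows that the subset products of $A_0$ are precisely the positive integers $n$ whose prime factorization has multiplicity at most $3$ on every prime $p \leq N^{1/2}$ (the bound of $1$ for primes $p > N^{1/2}$ is automatic once $n \leq N$). Equivalently, any new element $m \leq N$ added to $A_0$ must satisfy $p^4 \mid m$ for some prime $p \leq N^{1/4}$. This narrows the search for new elements considerably.

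The heart of the construction is to partition (a suitable subset of) the primes $p \leq N^{1/3}$ into carefully chosen triples and, for each triple $(p_1,p_2,p_3)$, to add one new element, for instance of the form $p_i^3\cdot q$ for a suitably chosen anchor prime $q$, together with compensating modifications to $A_0$ (such as removing $q$ from the base) to break the immediate identity $\{p_i^3,q\}=\{p_i,p_i^2,q\}$. The bookkeeping per triple is $+1$ net element, which over all triples contributes $\frac{1}{3}\pi(N^{1/3})+O(1)$ additions. The $\frac{1}{3}$ coefficient reflects the combinatorial balance required: each added element forces, on average, one ``sacrifice'' of a base element for every two other elements that survive, in order to preserve the distinct-subset-product property.

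The main obstacle I anticipate is the verification of cross-triple compatibility: showing that the collection of new elements, taken together with the (modified) $A_0$, does not produce coincidences of subset products via identities across different triples such as $\{p_1^3 q, p_2, p_2^2\} = \{p_1, p_1^2, p_2^3 q\}$. Ruling these out requires a careful combinatorial design, for example by using distinct anchor primes $q$ for different triples, or by arranging the anchors into a Sidon-type configuration so that the offending multiplicative identities cannot occur. The proof would then proceed by a case analysis on which added elements are present in a given subset, demonstrating uniqueness of the resulting prime-exponent vector in each case. Once global consistency is established, the count directly yields $|A|\geq \pi(N)+\pi(N^{1/2})+\tfrac{1}{3}\pi(N^{1/3})-O(1)$, proving Theorem~\ref{lowerbound}.
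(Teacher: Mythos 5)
Your proposal identifies the right target (gaining roughly $\tfrac{1}{3}\pi(N^{1/3})$ from triples of small primes) but does not actually provide a construction, and the sketch you give is internally inconsistent in ways that suggest the approach would not close cleanly. Your preliminary observation is that any element added to $A_0$ without removing anything must be divisible by $p^4$ for some small prime $p$; yet your candidate new element $p_i^3 q$ (with $q$ a distinct anchor prime) has no prime appearing to the fourth power, so it is already a subset product of $A_0$ via $\{p_i, p_i^2, q\}$, exactly as you note. You then propose to repair this by removing $q$ (or similar) from the base — but if you add one element per triple and remove at least one element per triple, the net gain is at most zero, not $+1$. The stated ``$+1$ net per triple'' is never derived from the proposed moves, and the later appeals to ``Sidon-type configurations'' and a case analysis are placeholders rather than an argument. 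In short: the combinatorial design you correctly identify as the crux of the proof is the part that is missing.

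The paper takes a different and more direct route: rather than augmenting $A_0$ and then patching the resulting collisions, it \emph{replaces} the small-prime portion of the example outright. Primes in $(N^{1/3},N]$ contribute $p$, primes in $(N^{1/3},N^{1/2}]$ additionally contribute $p^2$, and the primes $p \le N^{1/3}$ are partitioned into disjoint triples $\{p,q,r\}$; for each triple one adds the seven elements $\{p^2q,\; p^2r,\; p^2,\; qr,\; p^3,\; q^3,\; r^3\}$, all of which lie in $[N]$. A short check on the exponent vectors in $\mathbb{Z}^{\{p,q,r\}}$ shows this $7$-element family has distinct subset products, and since distinct triples use disjoint primes (and the large/medium primes are used only once each or as $\{p,p^2\}$), the whole set has distinct subset products with no cross-triple interactions to rule out. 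The count is $(\pi(N)-\pi(N^{1/2})) + 2(\pi(N^{1/2})-\pi(N^{1/3})) + 7\bigl(\tfrac{1}{3}\pi(N^{1/3})-O(1)\bigr)$, which gives the claimed bound. This ``replace, don't patch'' design is what your proposal is missing: it sidesteps the cross-compatibility problem entirely, whereas your augment-and-repair scheme would have to confront it and, as written, does not.
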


It is also natural to consider the additive variant of Question \ref{mainconj}, or in other words, to determine the asymptotic behavior of the function $g(k)$. This is another problem of Erd\H{o}s, which can be seen at \url{https://www.erdosproblems.com/1}. Although our methods do not address this problem, the interested reader may consult \cite{DFX}, \cite{S} for the best-known lower bounds on $g(k)$ and some history.

Since squares feature so prominently in Example \ref{firstexmp}, one may ask about what kinds of estimates can be obtained in the absence of squares. Our method answers this question as well, with a somewhat smaller second-order term.
\begin{theorem}\label{squarefree} Let $h(N)$ be the maximal size of a subset of $[N]$ consisting of squarefree integers with distinct subset products. Then $h(N)\leq \pi(N)+\frac{1}{2}\pi(N^{1/2}) + O(N^{5/12})$.
\end{theorem}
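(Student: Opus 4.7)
The plan is to adapt the proof of Theorem \ref{mainthm} to the squarefree setting; the squarefree hypothesis naturally halves the second-order term. Let $A \subset [N]$ consist of squarefree integers with distinct subset products. As in the main theorem, I would partition the primes in $[N]$ into scales (small, medium, and large around $N^{1/2}$), with thresholds tuned to optimize the eventual error to $N^{5/12+o(1)}$. For each $n \in A$, since $n \leq N$ is squarefree there is at most one prime factor exceeding $N^{1/2}$; write $n = L(n) \cdot S(n)$ accordingly, with $L(n) = 1$ if no such factor exists and $S(n)$ the $N^{1/2}$-smooth part. The squarefree hypothesis forces each exponent vector $v(n) = (v_p(n))_p$ to be $\{0,1\}$-valued throughout.

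The contribution of elements with $L(n) > 1$ is at most $\pi(N) - \pi(N^{1/2}) + O(N^{5/12+o(1)})$, by the same large-prime argument as in Theorem \ref{mainthm}. It therefore suffices to bound the count of $N^{1/2}$-smooth elements of $A$ by $\tfrac{3}{2}\pi(N^{1/2}) + O(N^{5/12+o(1)})$. For comparison, the non-squarefree version of this count allows up to $2\pi(N^{1/2}) + O(\cdot)$, saturated by Example \ref{firstexmp} ($\pi(N^{1/2})$ small primes together with the $\pi(N^{1/2})$ squares $p^2$). The savings of $\tfrac{1}{2}\pi(N^{1/2})$ in the squarefree case comes from the absence of squares: every squarefree composite with all prime factors $\leq N^{1/2}$ has $\omega(n) \geq 2$ distinct small prime factors, so it ``costs'' at least two slots among the small primes rather than one.

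To leverage this, I would re-run the linear-algebra and pigeonhole arguments from Theorem \ref{mainthm} on the $\{0,1\}$-valued vectors $v(n)$ associated to smooth elements of $A$. The distinct subset product condition forces the corresponding $2^{|A_{\text{smooth}}|}$ subset sums to be distinct in $\mathbb{Z}^{\pi(N^{1/2})}$, and a double-counting on $\sum_n \omega(n)$ (using $\omega(n) \geq 2$ for composites, while small primes in $A$ contribute weight exactly $1$) produces the factor-$\tfrac{1}{2}$ improvement in the composite count. The main obstacle is the medium-prime bookkeeping: replicating the $O(N^{5/12+o(1)})$ error of Theorem \ref{mainthm} while preserving the squarefree gain through the medium-scale analysis requires re-executing the scale decomposition carefully, since the $\tfrac{1}{2}$ improvement must propagate cleanly through every intermediate scale rather than collapsing back to the non-squarefree bound at one of them.
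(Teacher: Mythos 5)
Your proposed decomposition does not survive contact with the extremal example, so there is a genuine gap rather than just a different route. You claim that the elements of $A$ with a prime factor exceeding $N^{1/2}$ number at most $\pi(N)-\pi(N^{1/2})+O(N^{5/12+o(1)})$, and that the $N^{1/2}$-smooth elements number at most $\tfrac32\pi(N^{1/2})+O(N^{5/12+o(1)})$. The first claim is false: a single large prime $p$ may divide two elements $pq$ and $pr$ of $A$ with $q,r\leq N^{1/2}$, and the construction proving Theorem \ref{lowerbound2} does exactly this for about half of the primes $\leq N^{1/2}$, producing roughly $\pi(N)-\tfrac12\pi(N^{1/2})$ squarefree elements with a large prime factor. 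The same example has about $\pi(N^{1/2})$ smooth composite elements ($q_ir_i$ and $r_iq_{i+1}$), not $\tfrac12\pi(N^{1/2})$, which also defeats the mechanism you propose for the savings: double-counting $\sum_n\omega(n)$ with $\omega(n)\geq 2$ for squarefree smooth composites only yields a bound if each prime $\leq N^{1/2}$ can divide a bounded number of such elements, and nothing in the distinct-subset-products hypothesis gives that; a prime can appear in many composite elements (the tree example in Section 5 already uses each medium/small prime in several products). A raw pigeonhole on the $\{0,1\}$-valued exponent vectors of smooth elements is also far too weak, since distinct subset sums in $\mathbb{Z}^{\pi(N^{1/2})}$ only force $|A_{\text{smooth}}|\lesssim\pi(N^{1/2})\log\log N$, and the small primes (below $N^{1/3}$) cannot be tracked coordinatewise at all --- the paper must discard them entirely via Proposition \ref{smallcontrol}.

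The actual source of the factor $\tfrac12$ in the paper is structural, not arithmetic, and Theorem \ref{squarefree} is an immediate corollary of Theorem \ref{mainestimate} with $\Psq=\emptyset$. In the prime factorization graph on $\Plarge\cup\Pmed\cup\{1\}$, after short even circuits are removed (Corollary \ref{removeeven}), the remaining short odd cycles are pairwise vertex-disjoint (Proposition \ref{disj}); since no two large primes are adjacent, each such cycle contains an edge between two vertices of $\Pno\cup\{1\}$, so there are at most $\tfrac12(|\Pno|+1)$ of them and destroying them costs only $\tfrac12\pi(N^{1/2})+O(1)$ edges (Lemma \ref{removeodd2}). The girth bound of Lemma \ref{edgecount} then caps the edge count at roughly the vertex count. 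For general $A$ one pays an extra $|\Psq|$ for elements divisible by the square of a medium prime, which is what separates Theorems \ref{mainthm} and \ref{squarefree}; for squarefree $A$ that term vanishes. To repair your argument you would need to replace the $\omega$-counting with an analysis of how large and medium primes can be shared among elements, which is precisely the cycle analysis of Sections 3 and 4.
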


This estimate is also sharp up to the error term.
\begin{theorem}\label{lowerbound2} $h(N) \geq \pi(N)+\frac{1}{2}\pi(N^{1/2}) + o(\pi(N^{1/2}))$.
\end{theorem}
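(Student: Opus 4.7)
The plan is to construct a squarefree set $A \subset [N]$ of size $\pi(N) + \tfrac{1}{2}\pi(N^{1/2}) + o(\pi(N^{1/2}))$ with distinct subset products by mimicking Example \ref{firstexmp} but replacing each square $p^2$ with a squarefree ``triangle.'' Pairing small primes $p, q$ and introducing a third prime $r$, the three products $\{pq, pr, qr\}$ play the role of a single $p^2$; since each triangle consumes two small primes instead of one, the net gain over the trivial baseline of $\pi(N)$ halves to $\tfrac{1}{2}\pi(N^{1/2})$.

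First fix a threshold $M = N^{1/2}(1 - 1/\log N)$, list the primes $p_1 < p_2 < \cdots < p_{\pi(M)}$ up to $M$, and pair them as $(p_{2i-1}, p_{2i})$ for $i = 1, \ldots, k := \lfloor \pi(M)/2 \rfloor$. For each pair I choose a \emph{distinct} prime $r_i$ with $M < r_i \leq N/p_{2i}$. Sorting pairs by $p_{2i}$ increasing, the allowed sets $R_i := \{r \text{ prime}: M < r \leq N/p_{2i}\}$ are nested decreasingly, so Hall's marriage theorem reduces the existence of such a choice to the tail inequality $|R_j| \geq k - j + 1$ for every $j$. The prime number theorem, applied to both $|R_j| = \pi(N/p_{2j}) - \pi(M)$ and $k - j + 1$, reduces this to a trivial inequality (roughly $\eta + 4/\log N \geq 0$ in tail fraction $\eta$). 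I then set
\[
A := \{p \text{ prime}: M < p \leq N\} \cup \bigcup_{i=1}^{k}\{p_{2i-1}p_{2i},\, p_{2i-1}r_i,\, p_{2i}r_i\};
\]
every element is squarefree and at most $N$ (the latter from $p_{2i}r_i \leq N$).

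The main step is verifying distinct subset products. Since the triangles $\{p_{2i-1}, p_{2i}, r_i\}$ are pairwise vertex-disjoint (the $p_j$ lie below $M$ and the $r_i$ are distinct primes above $M$), any $\{-1,0,1\}$-combination $\sum_{a \in A}\epsilon_a v_a = 0$ of exponent vectors $v_a$ decomposes into independent per-triangle equations, together with constraints $\alpha_q = 0$ for primes $q \in A$ not in any triangle. Writing $\epsilon_i,\beta_i,\gamma_i$ for the coefficients on $p_{2i-1}p_{2i},\,p_{2i-1}r_i,\,p_{2i}r_i$, the equations at $p_{2i-1}$ and $p_{2i}$ (both excluded from $A$) give $\beta_i = -\epsilon_i = \gamma_i$, while the equation at $r_i \in A$ becomes $\alpha_{r_i} = -\beta_i - \gamma_i = 2\epsilon_i$. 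Since $\alpha_{r_i}, \epsilon_i \in \{-1,0,1\}$, this forces $\epsilon_i = 0$ and hence $\beta_i = \gamma_i = 0$. The parity phenomenon is the heart of the argument: the two excluded small primes pin the $r_i$-degree of any vanishing combination to an even value, incompatible with $|\alpha_{r_i}| \leq 1$ unless the combination is trivial.

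Counting yields $|A| = (\pi(N) - \pi(M)) + 3k = \pi(N) + \pi(M)/2 + O(1) = \pi(N) + \tfrac{1}{2}\pi(N^{1/2}) + o(\pi(N^{1/2}))$. The main obstacle is the Hall matching: $M$ must be close enough to $N^{1/2}$ that $\pi(M) = (1-o(1))\pi(N^{1/2})$, yet strictly below $N^{1/2}$ so that the narrow range $(M, N/p_{2i}]$ contains enough primes to supply distinct $r_i$'s; the calibration $\epsilon = 1/\log N$ balances these demands.
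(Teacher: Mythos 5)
Your construction is correct, and it is a genuine variant of the paper's. Both proofs rest on the same core idea --- a ``triangle'' in the prime factorization graph yields three elements of $A$ at the cost of two primes below $\sqrt{N}$, which is where the coefficient $\tfrac{1}{2}$ comes from --- but the gadgets differ. The paper's triangle is $\{p_iq_i, p_ir_i, q_ir_i\}$ with $p_i$ a \emph{large} prime (sacrificed as a singleton) and $q_i,r_i\leq N^{1/2}$; since that alone only recovers $\pi(N)$, the paper must additionally chain consecutive triangles together with edges $r_iq_{i+1}$ to reach $\pi(N)+\tfrac12\pi(N^{1/2})$. Your triangle instead sits on two primes below $M=N^{1/2}(1-1/\log N)$ and one prime $r_i$ just above $M$ that you \emph{retain} as a singleton, so the triangles are pairwise disjoint and no chaining is needed. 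This buys you a much cleaner verification of the distinct-subset-products property: the valuation equations decouple triangle by triangle, and the local parity obstruction $\alpha_{r_i}=2\epsilon_i$ with $\alpha_{r_i},\epsilon_i\in\{-1,0,1\}$ kills every nontrivial relation --- a check the paper omits entirely, and which is less transparent for its chained graph. The matching steps are morally the same (both exploit that an interval just above the cutoff contains at least half as many primes as the corresponding interval just below it, losing only $o(\pi(N^{1/2}))$ primes near $N^{1/2}$); your Hall's-theorem formulation with nested sets $R_j$ is a tidy way to organize it, and the critical case $p_{2j}$ close to $M$ does reduce, via the prime number theorem in intervals of length $\geq N^{1/2}/\log N$, to an inequality that holds with a factor of two to spare. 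One small presentational point: you should state explicitly that no $r_i$ coincides with any $p_j$ (clear, since $r_i>M\geq p_j$) and that the $r_i$, being distinct primes in $(M,N]$, each divide exactly three elements of $A$; these facts are what make the per-triangle decoupling legitimate.
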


\subsection{Notation and strategy of proof}
\begin{definition}[Subset Product Set] Given a finite set $S\subset \mathbb{N}$, we define the \textit{subset product set} $\Pi(S)=\{\prod_{t\in T}t:T\subset S\}$. 
\end{definition}

Throughout the proof, we will use the fact that an element of $[N]$ can be divisible by at most one prime in $(N^{1/2},N]$, and at most two primes (with multiplicity) in $(N^{1/3},N]$. We thus define 
\begin{definition}[Small, Medium, and Large Primes, Valuations] Let $\Psmall$ denote the set of primes in $[N^{1/3}],$ $\Pmed$ the set of primes in $(N^{1/3},N^{1/2}]$, and $\Plarge$ the set of primes in $(N^{1/2},N]$. We will also use the terms ``small primes", ``medium primes", and ``large primes" to refer to elements of $\Psmall$, $\Pmed$, and $\Plarge$, respectively.

For a fixed prime $p\in[N]$, and $n\in\mathbb{N}$, let $V_p(n)$ denote the valuation of $n$ at $p$, i.e., the largest nonnegative integer $r$ such that $p^r$ divides $n$. Then define functions 
\[\Vsmall:\mathbb{N}\to \mathbb{Z}^{\Psmall},\quad \Vmed:\mathbb{N}\to \mathbb{Z}^{\Pmed}, \quad \Vlarge:\mathbb{N}\to \mathbb{Z}^{\Plarge}\quad\text{by}\] 
\[\Vsmall(n) = (V_p(n))_{p\in \Psmall}, \quad \Vmed(n) = (V_p(n))_{p\in \Pmed}, \quad \Vlarge(n) = (V_p(n))_{p\in \Plarge}. \]
We will also define $\Vlarge\times\Vmed:\mathbb{N}\to \mathbb{Z}^{\Plarge\cup\Pmed}$ as $(\Vlarge\times\Vmed)(n) = (V_p(n))_{p\in \Plarge\cup\Pmed}$.
\end{definition}

We will also use some standard asymptotic notation:
\begin{definition}[Big-O and Little-o Notation]\label{asymptotic} Given functions $F$, $G$, and $H$ from $\mathbb{N}$ to $[0,\infty)$, we say
\begin{itemize}
    \item $F(n) = O(G(n))$ if there is a constant $C>0$ such that $F(n)\leq CG(n)$ for all $n\in\mathbb{N}$,
    \item $F(n) = o(G(n))$ if for all $c>0$, there is an $N_0\in\mathbb{N}$ such that for all $n\geq N_0$, $F(n)\leq cG(N)$,
    \item $F(n) = G(n)+O(H(n))$ if $|F(n) - G(n)| = O(H(n))$.   
\end{itemize}
\end{definition}

To prove Theorem \ref{mainthm}, we will use a graph-theoretic approach. Our graphs will be simple, i.e., containing no loops or multiple edges. We will need to pay attention to certain subgraphs in our analysis:
\begin{definition}[Paths, Cycles, Circuits] Let $G = (V,E)$ be a graph. 

\begin{itemize}
    \item A path of length $k$ is a collection of edges of the form 
    
    $\{\{v_1,v_2\}, \{v_2,v_3\},\dots,\{v_{k},v_{k+1}\}\}\subset E$, where $v_1,\dots,v_{k+1}$ are distinct vertices in $V$.  
    \item A cycle of length $k$ is a collection of edges of the form
    
    $\{\{v_1,v_2\}, \{v_2,v_3\},\dots,\{v_{k-1},v_k\}, \{v_k,v_1\}\}\subset E$, where $v_1,\dots,v_k$ are distinct vertices in $V$.  
    \item A circuit of length $k$ is a collection of edges of the form
    
    $\{\{v_1,v_2\}, \{v_2,v_3\},\dots,\{v_{k-1},v_k\}, \{v_k,v_1\}\}\subset E$, where $v_1,\dots,v_k$ are (not necessarily distinct) vertices in $V$.  
    \end{itemize}
\end{definition}

In \cite{E1}, to prove $f(N)\leq \pi(N)+O(\pi(N^{1/2}))$, Erd\H{o}s counted the possible number of prime factorizations in elements of $\pi(A)$, where $A\subset [N]$ has distinct subset products. His proof is a counting argument based on the following three observations:
\begin{itemize}
    \item An element of $[N]$ is divisible by at most one large prime, and at most two large or medium primes (with multiplicity).
    \item The range of $\Vsmall$ on $\Pi([N])$ is small (see Proposition \ref{smallcontrol}).
    \item There are at most $\pi(N)$ elements $a$ of $A$ such that there is a prime $p$ for which $p$ divides $a$, but no other element of $A$ is a multiple of $p$. 
\end{itemize}
Optimizing his argument, one can obtain the bound $f(N)\leq \pi(N)+22\pi(N^{1/2})$. We will utilize each of these observations in our approach, but we take into account some more refined information as well. For example, if $p,q$ are large primes and $r,s$ are medium primes, the argument from \cite{E1} does not use the fact that $A$ cannot contain the elements $pr$, $qr$, $ps$, and $qs$ (although it does show that $A$ cannot contain this configuration for many primes $p,q,r,s$).

The strategy of our proof is as follows. Given a set $A$ with distinct subset products, we can produce a graph whose vertices correspond to large and medium primes, and whose edges correspond to (most) elements of $A$. We call this graph the \textit{prime factorization graph} of $A$. The condition that $A$ has distinct subset products restricts the number of short cycles that can appear in this graph. We can exploit this fact to prove an upper bound on the number of edges in this graph, and thus an upper bound on $|A|$. In Section 2, we show how to construct this prime factorization graph. In Section 3, we use the condition that $A$ has distinct subset products to remove circuits and cycles from the prime factorization graph without removing too many edges. In Section 4, we estimate the number of edges in a graph without any short cycles, and finally establish Theorems \ref{mainthm} and \ref{squarefree}.

In Section 5, we produce some new examples of nearly maximal sets with distinct subset products and prove Theorems \ref{lowerbound} and \ref{lowerbound2}.

\subsection*{Acknowledgements}
The author would like to thank Terence Tao for many helpful conversations and for introducing the author to this problem. The author would also like to thank Csaba S\'andor for observing that this argument gives an error term of $O(N^{5/12})$, as opposed to the estimate $O(N^{5/12+o(1)})$ written in an earlier version of this paper. 

\section{Constructing the Prime Factorization Graph}
Given a set $A$ with distinct subset products, we will construct a graph that encodes the large and medium prime factors of (most) elements of $A$. For this to be effective, we first need to control how much information is lost by ignoring small primes.
\begin{proposition}\label{smallcontrol} Let $\Pi([N])$ denote the subset product set of $[N]$. Then
\[|\Vsmall(\Pi([N]))| \leq \exp(O(N^{1/3})).\]
\end{proposition}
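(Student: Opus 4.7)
The plan is to bound $|\Vsmall(\Pi([N]))|$ by a straightforward product-of-valuations count. The key observation is that any $n \in \Pi([N])$ is a product $\prod_{a \in S}a$ for some $S \subseteq [N]$, and since the elements of $S$ are distinct, $n$ divides $N! = \prod_{k=1}^N k$. Consequently, for every prime $p$ we have $V_p(n) \leq V_p(N!)$.

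Next I would apply Legendre's formula to control $V_p(N!) = \sum_{k \geq 1}\lfloor N/p^k\rfloor \leq N/(p-1)$. Thus, for each $p \in \Psmall$, the coordinate $V_p(n)$ takes at most $N/(p-1) + 1$ distinct values as $n$ ranges over $\Pi([N])$. Since $\Vsmall(n)$ is the tuple of these coordinates over $p \in \Psmall$, we obtain the crude product bound
\[
|\Vsmall(\Pi([N]))| \leq \prod_{p \in \Psmall}\left(\frac{N}{p-1}+1\right).
\]

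Taking logarithms and using the very loose estimate $\log(N/(p-1)+1) \leq \log N + O(1)$, the right-hand side is at most $\exp\bigl(|\Psmall|(\log N + O(1))\bigr)$. By the prime number theorem, $|\Psmall| = \pi(N^{1/3}) = O(N^{1/3}/\log N)$, so the exponent is $O(N^{1/3})$, which is comfortably $O(N^{1/3+o(1)})$ as required.

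There is no real obstacle here: the argument is purely a Legendre-formula bookkeeping computation combined with the prime number theorem. The only mild subtlety is remembering to invoke divisibility of $n$ by $N!$ (rather than attempting a direct size estimate on $n$, which can be as large as $N!$ and would only give a weaker per-prime bound of $\log_p(N!) \sim N/\log p$, still sufficient but slightly less clean). A small amount of care is also needed to absorb the $+1$ and $O(1)$ terms into the final $o(1)$ in the exponent, but this is routine.
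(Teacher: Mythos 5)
Your proof is correct and follows essentially the same approach as the paper: bound the number of possible values of $V_p(n)$ for each small prime $p$ and multiply over $p\in\Psmall$. The only (immaterial) difference is the per-prime bound — you use divisibility of $N!$ and Legendre's formula to get $N/(p-1)+1$ values, while the paper uses the cruder observation that an element of $\Pi([N])$ is a product of at most $N$ factors each with $p$-adic valuation at most $\log_2 N$; both yield $\exp(O(N^{1/3+o(1)}))$.
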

\begin{proof} For a fixed $p\in \Psmall$ and $n\in [N]$, $V_p(n)\in [0,\log_2(N)]$. Since an element of $\Pi([N])$ is a product of at most $N$ elements of $[N]$, for a fixed $n\in \Pi([N])$, $V_p(n)\in [0,N\log_2(N)]$. Thus $|\Vsmall(\Pi([N]))| \leq $
\[\prod_{p\in \Psmall}(N\log_2(N)+1)= \exp(|\Psmall|\log(N\log_2(N)+1)) = \exp(O(N^{1/3})),\]
where we use the estimate $|\Psmall| = O(N^{1/3}/\log(N))$ from the prime number theorem.
\end{proof}
In view of this proposition, if $A$ has distinct subset products, there cannot be many elements of $\Pi(A)$ with the same valuations at medium and large primes. In fact, the same is true for $A$.
\begin{proposition} Given $\mathbf{r}\in \mathbb{Z}^{\Plarge\cup\Pmed}$, let $A_{\mathbf{r}} = \{a\in A:(\Vlarge\times\Vmed)(a) = \mathbf{r}\}$.
Let $R = \{\mathbf{r}\in \mathbb{Z}^{\Plarge\cup\Pmed}:|A_\mathbf{r}|\geq 2\}$. Then 
\[\sum_{\mathbf{r}\in R}|A_\mathbf{r}|= O(N^{1/3}).\]
\end{proposition}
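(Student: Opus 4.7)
The plan is to count the subset products of elements in $\bigsqcup_{\mathbf{r}\in R}A_\mathbf{r}$ in two ways, invoking Proposition \ref{smallcontrol}. Note that the sets $A_\mathbf{r}$ are pairwise disjoint by construction, since every $a\in A$ has a unique $(\Vlarge\times\Vmed)$-value, so this disjoint union has cardinality $\sum_{\mathbf{r}\in R}|A_\mathbf{r}|$. Because $A$ has distinct subset products, its $2^{\sum_{\mathbf{r}\in R}|A_\mathbf{r}|}$ subsets yield that many distinct elements of $\Pi(A)\subset\Pi([N])$. I would then bound this count by classifying each subset product by its $(\Vlarge\times\Vmed)$-value and, within that, by its $\Vsmall$-value.

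For the classification by $(\Vlarge\times\Vmed)$-value, observe that if a subset $S\subset\bigsqcup_{\mathbf{r}\in R}A_\mathbf{r}$ satisfies $|S\cap A_\mathbf{r}|=k_\mathbf{r}$, then
\[(\Vlarge\times\Vmed)\!\left(\prod_{a\in S}a\right) \;=\; \sum_{\mathbf{r}\in R}k_\mathbf{r}\,\mathbf{r},\]
which depends only on the tuple $(k_\mathbf{r})\in\prod_{\mathbf{r}\in R}\{0,1,\dots,|A_\mathbf{r}|\}$. Hence at most $\prod_{\mathbf{r}\in R}(|A_\mathbf{r}|+1)$ distinct $(\Vlarge\times\Vmed)$-values arise. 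For any fixed such value, two subset products sharing it would have to differ in their $\Vsmall$-values (else they would be equal, contradicting the distinct subset products hypothesis), so at most $|\Vsmall(\Pi([N]))|\leq \exp(O(N^{1/3+o(1)}))$ of our subset products share that value by Proposition \ref{smallcontrol}. Combining,
\[2^{\sum_{\mathbf{r}\in R}|A_\mathbf{r}|} \;\leq\; \prod_{\mathbf{r}\in R}(|A_\mathbf{r}|+1)\cdot \exp(O(N^{1/3+o(1)})).\]

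Taking logarithms yields $(\log 2)\sum_{\mathbf{r}\in R}|A_\mathbf{r}| \leq \sum_{\mathbf{r}\in R}\log(|A_\mathbf{r}|+1) + O(N^{1/3+o(1)})$. To finish, I would apply the elementary inequality $\log(n+1)\leq \tfrac{\log 3}{2}\,n$ for all integers $n\geq 2$ (equality holds at $n=2$, and one checks that $\log(n+1)/n$ is decreasing on $[2,\infty)$); this applies to each term since $|A_\mathbf{r}|\geq 2$ by definition of $R$. Substituting gives $\bigl(\log 2 - \tfrac{\log 3}{2}\bigr)\sum_{\mathbf{r}\in R}|A_\mathbf{r}| \leq O(N^{1/3+o(1)})$, and since $\log 2 > \tfrac{\log 3}{2}$ (equivalently $4>3$), the proposition follows. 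The only real content is keeping track of how the medium/large valuation of a subset product depends on the multiplicities $(k_\mathbf{r})$; I do not expect any other obstacle, as the concluding inequality is purely elementary.
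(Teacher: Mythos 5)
Your proof is correct and follows essentially the same strategy as the paper: both arguments exhibit exponentially many subsets of $\bigcup_{\mathbf{r}\in R}A_\mathbf{r}$ whose products collide in $(\Vlarge\times\Vmed)$ and hence must be separated by $\Vsmall$, invoke Proposition \ref{smallcontrol} to cap their number, and close with an elementary inequality exploiting $|A_\mathbf{r}|\geq 2$. The only difference is bookkeeping: the paper restricts to subsets meeting each $A_\mathbf{r}$ in exactly $\lfloor|A_\mathbf{r}|/2\rfloor$ elements, so all products share a single valuation vector and are counted from below by $\binom{n}{\lfloor n/2\rfloor}\geq(1.1)^n$, whereas you count all $2^{\sum_{\mathbf{r}}|A_\mathbf{r}|}$ subsets and pigeonhole over the $\prod_{\mathbf{r}}(|A_\mathbf{r}|+1)$ multiplicity tuples --- an equally valid variant.
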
 
\begin{proof} For each $\mathbf{r}\in R$, let $B_\mathbf{r}\subset A_{\mathbf{r}}$ and $C_\mathbf{r}\subset A_{\mathbf{r}}$ each have size $\floor{|A_{\mathbf{r}}|/2}$. Then 
\[\prod_{\mathbf{r}\in R}\prod_{b\in B_{\mathbf{r}}}b \quad\text{and}\quad  \prod_{\mathbf{r}\in R}\prod_{c\in C_{\mathbf{r}}}c\]
each have the same valuations at all large and medium primes. There are thus at most 
$\exp(O(N^{1/3}))$ elements of $\Pi(A)$ of the form $\prod_{\mathbf{r}\in R}\prod_{b\in B_{\mathbf{r}}}b$. There are at least 
\[\prod_{\mathbf{r}\in R}\binom{|A_\mathbf{r}|}{\floor{|A_\mathbf{r}|/2}} \geq \prod_{\mathbf{r}\in R}(1.1)^{|A_\mathbf{r}|}\]
elements of this form, where we used the inequality $\binom{n}{\floor{n/2}}\geq (1.1)^n$ for all $n\geq 2$. Thus 
\[(1.1)^{\sum_{\mathbf{r}\in R}|A_\mathbf{r}|}\leq \exp(O(N^{1/3})),\]
and taking logarithms gives the desired inequality.
\end{proof}

\begin{corollary}\label{graphconstruct} Given $A\subset [N]$ with distinct subset products, one can remove $O(N^{1/3})$ elements of $A$ to get a set $A'\subset A$ such that $\Vlarge\times\Vmed$ is injective when restricted to $A'$. By removing at most one more element, we may assume $(\Vlarge\times\Vmed)(a)\neq \mathbf{0}$ for all $a\in A'.$    
\end{corollary}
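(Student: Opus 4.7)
The plan is to deduce this corollary directly from the preceding proposition, using a straightforward pigeonhole-style argument followed by a one-element cleanup.

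First, I would invoke the preceding proposition to obtain $\sum_{\mathbf{r}\in R}|A_\mathbf{r}| = O(N^{1/3+o(1)})$, where $R$ is the set of valuation vectors $\mathbf{r}\in\mathbb{Z}^{\Plarge\cup\Pmed}$ with $|A_\mathbf{r}|\geq 2$. For each such $\mathbf{r}\in R$, I would then fix an arbitrary element of $A_\mathbf{r}$ and remove the remaining $|A_\mathbf{r}|-1$ elements from $A$. The total number of removed elements is $\sum_{\mathbf{r}\in R}(|A_\mathbf{r}|-1) \leq \sum_{\mathbf{r}\in R}|A_\mathbf{r}| = O(N^{1/3+o(1)})$. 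By construction, the remaining set $A'$ has the property that for every $\mathbf{r}\in\mathbb{Z}^{\Plarge\cup\Pmed}$, $|A_\mathbf{r}\cap A'|\leq 1$, which is exactly the statement that $\Vlarge\times\Vmed$ is injective on $A'$.

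For the final sentence, I would note that injectivity of $\Vlarge\times\Vmed$ on $A'$ already implies that at most one element $a\in A'$ satisfies $(\Vlarge\times\Vmed)(a) = \mathbf{0}$, since all such elements share the same image under $\Vlarge\times\Vmed$. Removing this at most one element yields a set where no $a$ has vanishing large/medium valuation vector, at the cost of a single additional removal that is absorbed into the $O(N^{1/3+o(1)})$ bound.

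The only potential subtlety is making sure the previous proposition applies with $A$ having distinct subset products (it does, by hypothesis), and that picking an arbitrary representative from each $A_\mathbf{r}$ is legitimate; both are immediate, so there is no real obstacle. The corollary is essentially a packaging of the preceding proposition into a form convenient for the graph-theoretic setup of later sections, where injectivity of $\Vlarge\times\Vmed$ on $A'$ will let each element of $A'$ correspond uniquely to its edge in the prime factorization graph, and the nonvanishing condition ensures every element actually contributes an edge.
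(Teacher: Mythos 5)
Your proof is correct and is precisely the argument the paper intends: the corollary is stated without proof as an immediate consequence of the preceding proposition, and your representative-selection plus one-element cleanup is the standard way to fill in that deduction.
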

 Because of this corollary, we will assume here and in the next section that $\Vlarge\times\Vmed$ is injective and nonzero on $A$. Observe also that for $a\in A$, $a$ is divisible by at most one large prime, and at most two large or medium primes (with multiplicity). Thus the tuple $(\Vlarge\times\Vmed)(a)$ either consists of:
 \begin{itemize}
     \item a 1 at one index, with 0 at all other indices,
     \item a 1 at each of two indices, at most one of them from $\Plarge$, and a 0 at all other indices, or
     \item a 2 at an index from $\Pmed$ and a 0 at all other indices.
 \end{itemize}
 
 Under these conditions, we may define the prime factorization graph of a set $A$.
 
 \begin{definition}[Prime Factorization Graph] The prime factorization graph $G(A)$ associated to $A$ has vertex set $\Plarge\cup \Pmed \cup \{1\}$. We connect $1$ to a vertex $p\in \Plarge\cup\Pmed$ by an edge if there is an $a\in A$ such that $V_p(a)=1$ and $V_q(a)=0$ for all other primes $q\in \Plarge\cup\Pmed$. We connect $p,q\in \Plarge\cup \Pmed$ by an edge if there is an element of $A$ divisible by both $p$ and $q$. 
 \end{definition}
By the aforementioned discussion, there is a bijection between the edges of $G(A)$ and the elements of $A$ which are not divisible by the square of any medium prime. We will still need to consider those elements, so we define the following.
\begin{definition}[$\Psq$, $\Pno$] We define $\Psq = \{p\in \Pmed:p^2\mid a\text{ for some }a\in A\}$ and $\Pno = \Pmed\setminus\Psq$.
\end{definition}
Then $|A|$ is the sum of the number of edges in $G(A)$ and $|\Psq|$.

\bigskip

\begin{example} Suppose $N=121$, so $\Psmall = \{2,3\}$, $\Pmed = \{5,7,11\}$, and $\Plarge$ is the remaining set of primes in $[N]$. On the set $A=\{50, 105, 77, 55, 65, 26,51\} = \{2*5^2,3*5*7,7*11,5*11,5*13,2*13,3*17\}$, $\Vlarge\times\Vmed$ is injective and nonzero. Its prime factorization graph is (omitting the isolated vertices corresponding to the primes in $(17,121]$):
\begin{center}
\begin{tikzpicture}
    \begin{scope}[every node/.style={circle,thick,draw}]
    \node (A) at (3,0) {5};
    \node (B) at (0,3) {7};
    \node (C) at (0,0) {11};
    \node (D) at (3,3) {13};
    \node (E) at (6,3) {1};
    \node (F) at (6,0) {17};
\end{scope}

\begin{scope}[every node/.style={fill=white,circle},
              every edge/.style={draw = black, very thick}]
    \path  (A) edge node {$105$} (B);
    \path  (B) edge node {$77$} (C);
    \path (A) edge node {$55$} (C);
    \path (A) edge node {$65$} (D);
    \path (D) edge node {$26$} (E);
    \path (E) edge node {$51$} (F);
\end{scope}
\end{tikzpicture}
\end{center}
Each element of $A$ corresponds to an edge in the graph except for $50$, since $50$ is a multiple of $5^2$ and $5\in \Pmed$. 
\end{example}

\noindent\textbf{Remark.} Suppose $a\in A$ is divisible by $p^2$ for some $p\in \Pmed$. In our definition of $G(A)$, we do not include an edge corresponding to $a$. We could alternatively define the prime factorization graph so that $a$ would correspond to a loop from $p$ to itself. One can still make sense of the arguments in the following sections that way, but we found it clearer to express our graph-theoretic arguments using only simple graphs.

Having defined the necessary sets of primes, we will state our main estimate.
\begin{theorem}\label{mainestimate} Let $A\subset [N]$ have distinct subset products and let $\Psq = \\ \{p\in \Pmed: p^2\mid a\text{ for some }a\in A\}$. Then 
\[|A|\leq \pi(N)+\frac{1}{2}\pi(N^{1/2})+\frac{1}{2}|\Psq| + O(N^{5/12}).\]
\end{theorem}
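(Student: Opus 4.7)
I begin by applying Corollary~\ref{graphconstruct} to pass to a subset $A' \subset A$ with $|A'| \ge |A| - O(N^{1/3+o(1)})$, on which $\Vlarge\times\Vmed$ is injective and nonzero, so that $|A'| = |E(G(A'))| + |\Psq|$. Since $|V(G(A'))| = \pi(N) - \pi(N^{1/3}) + 1$, the theorem reduces to bounding the cyclomatic number $r(G(A')) = |E| - |V| + c$ by $\tfrac{1}{2}(\pi(N^{1/2}) - |\Psq|) + O(N^{5/12+o(1)})$.

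The key structural input is that every even cycle of length $2k$ in $G(A')$, say $p_1 - p_2 - \dots - p_{2k} - p_1$ with corresponding edge-elements $a_1, \dots, a_{2k}$, yields two alternating subsets $\{a_1, a_3, \dots\}$ and $\{a_2, a_4, \dots\}$ whose products agree at every large and medium prime (both equal $\prod_i p_i$ there). The distinct-subset-products hypothesis forces these two products to differ at small primes, so each such cycle imprints a nontrivial ``small-prime signature'' in $\mathbb{Z}^{\Psmall}$; by Proposition~\ref{smallcontrol}, only $\exp(O(N^{1/3+o(1)}))$ such signatures can ever occur. Each $\Psq$-element $p^2 s$ independently satisfies $v_{p^2 s} = 2 e_p \equiv 0 \pmod 2$, giving a ``free'' mod-$2$ kernel vector that, together with the fact that $\Plarge$ is an independent set in $G(A')$ (no two large primes form an edge, since their product exceeds $N$), ultimately yields the $-\tfrac{1}{2}|\Psq|$ savings.

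Following the outline for Sections~3 and 4, I would execute the proof in two stages. In the first stage (cycle removal), I iteratively delete edges of $G(A')$ to eliminate all cycles of length at most $g$, for a parameter $g$ to be optimized, using the signature bound and an amortized deletion scheme to keep the total number of deletions at $O(N^{5/12+o(1)})$. In the second stage (Moore-type bound), on the resulting girth-$>g$ graph I apply a Moore-type degree argument, exploiting that every cycle must pass through $\Pmed \cup \{1\}$ (a set of size $\pi(N^{1/2}) + O(1)$) and that $\Psq$-relations remove $\tfrac{1}{2}|\Psq|$ from the excess, to obtain the target cyclomatic bound.

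The \textbf{main obstacle} is balancing the two stages. A naive one-edge-per-cycle deletion fails because the same edge may lie in many short cycles; I would need an amortized scheme (perhaps iteratively targeting the edge with highest short-cycle multiplicity, or a pruning argument aligned with the signature count) that leverages the $\exp(O(N^{1/3+o(1)}))$ signature bound to control the total cost. The exponent $5/12 = 1/3 + 1/12$ arises from balancing this signature bound against the Moore-type excess $|V|^{1+O(1/g)}$, with $g$ chosen so that $|V|^{1/g} \approx N^{1/12}$; the subtle interaction with $\Psq$-elements--distinguishing their $2e_p$ relations from the ordinary cycle relations in $\mathbb{F}_2^{\Pmed}$--is what pins down the precise $\tfrac{1}{2}|\Psq|$ term, rather than merely $|\Psq|$ or $0$.
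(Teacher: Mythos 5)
Your high-level plan matches the paper's architecture (pass to $A'$ via Corollary~\ref{graphconstruct}, remove short cycles using small-prime signatures, then apply a Moore-type bound to a graph whose ``essential'' vertex set has size about $\pi(N^{1/2})$), but the two steps you flag as unresolved are exactly the steps where the real content lies, and your proposed fixes do not work as stated. First, the obstacle you name in the cycle-removal stage --- that one edge may lie in many short cycles, requiring an ``amortized deletion scheme'' --- is dissolved, not amortized, in the correct argument: one takes a \emph{maximal edge-disjoint} collection $C_1,\dots,C_n$ of even circuits of length at most $2N^{1/12}$. Maximality guarantees that deleting $\bigcup_k C_k$ kills all short even circuits, and the count $n=O(N^{1/3+o(1)})$ comes not from assigning each circuit ``a distinct signature'' (which would only give the useless bound $n\le\exp(O(N^{1/3+o(1)}))$) but from the exponential amplification: the two alternating halves of each circuit can be swapped independently across the $n$ edge-disjoint circuits, producing $2^n$ distinct subset products all sharing the same medium/large valuations, whence $2^n\le\exp(O(N^{1/3+o(1)}))$ by Proposition~\ref{smallcontrol}. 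Since each circuit has length $\le 2N^{1/12}$, the total deletion cost is $O(N^{5/12+o(1)})$. Your sketch, read literally, bounds the number of signatures rather than taking a logarithm of a product count, and so does not give a usable bound on $n$.

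Second, your account of the $\tfrac{1}{2}|\Psq|$ term (``free mod-$2$ kernel vectors yielding a savings'') does not identify the actual mechanism, which requires odd cycles to be handled separately from even circuits and in two classes. After short even circuits are gone, any two short odd cycles are vertex-disjoint (this is Proposition~\ref{disj}, which you do not invoke and which is essential). Odd cycles through a vertex of $\Psq$ can be completed to an even relation by adjoining the element divisible by $p^2$, so a maximal edge-disjoint family of these is again removable at cost $O(N^{5/12+o(1)})$. The surviving short odd cycles avoid $\Psq$ and cannot contain an edge between two large primes, so each contains an edge inside $\Pno\cup\{1\}$; vertex-disjointness then caps their number at $\tfrac{1}{2}(|\Pno|+1)$, and the final bookkeeping $\tfrac{1}{2}|\Pno|+|\Psq|\le\tfrac{1}{2}\pi(N^{1/2})+\tfrac{1}{2}|\Psq|$ is where the coefficient $\tfrac{1}{2}$ on $|\Psq|$ actually comes from --- it is a cost paid for the square elements, offset against the cheaper removal of odd cycles meeting $\Psq$, not a kernel-dimension savings. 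As written, your proposal is a correct identification of the target decomposition with the two pivotal lemmas left as open problems, so it does not constitute a proof.
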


Since $\Psq=\emptyset$ if the elements of $A$ are all squarefree and $|\Psq|\leq \pi(N^{1/2})$ for any set $A$, Theorems \ref{mainthm} and \ref{squarefree} follow immediately from Theorem \ref{mainestimate}.

\section{Cycle Removal}
Throughout this section, $A\subset [N]$ will have distinct subset products and be such that $\Vlarge\times\Vmed$ is injective and nonzero on $A$. Unless otherwise specified, the graph $G$ will denote the prime factorization graph of $A$. 

\begin{lemma} Let $C_1,\dots,C_n$ be the sets of edges in a maximal collection of edge-disjoint even circuits in $G$, each with length at most $2N^{1/12}$. Then $n = O(N^{1/3})$, so $\sum_{k=1}^n|C_k| = O(N^{5/12})$.
\end{lemma}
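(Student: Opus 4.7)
The plan is to exploit the distinct subset products condition via a pigeonhole argument on small-prime valuations. Each even circuit of $G$ yields two disjoint halves of edges whose corresponding products in $A$ agree at every medium and large prime; an edge-disjoint family of $n$ such circuits then produces $2^n$ genuinely distinct subsets of $A$ whose products all share the same $(\Vlarge \times \Vmed)$-profile, so Proposition \ref{smallcontrol} forces $2^n$ to be at most $\exp(O(N^{1/3+o(1)}))$.

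In more detail, I would first fix an even circuit $C$ of length $2k$ with cyclic edges $e_1, \dots, e_{2k}$ and vertices $v_1, \dots, v_{2k}$, and split its edges into the odd-indexed half and the even-indexed half. For any prime $p \in \Plarge \cup \Pmed$ appearing as a vertex $v_i$, the two incident edges $e_{i-1}$ and $e_i$ carry opposite parities modulo $2$, so that occurrence of $p$ contributes one endpoint to each half; summing over all occurrences shows that $p$ is an endpoint the same number of times in each half. Since the bijection between edges of $G$ and elements of $A$ sends an edge $\{p,q\}$ to an integer with $V_p = V_q = 1$ on $\Plarge \cup \Pmed$, the products of the two halves have identical $(\Vlarge \times \Vmed)$-profiles. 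Given the edge-disjoint family $C_1, \dots, C_n$, I then form the $2^n$ subsets $T_\epsilon \subset A$ by picking one half of each $C_i$ according to $\epsilon \in \{0,1\}^n$. Edge-disjointness makes the $T_\epsilon$ distinct, and each product $\prod_{a \in T_\epsilon} a$ has the same medium/large prime valuations. If $2^n$ exceeded $|\Vsmall(\Pi(A))|$, two distinct $T_\epsilon$ would yield equal products, contradicting distinct subset products. Taking logarithms gives $n = O(N^{1/3+o(1)})$, and multiplying by the length cap $2N^{1/12}$ gives $\sum_{k=1}^n |C_k| = O(N^{5/12+o(1)})$.

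The step I expect to require the most care is the parity-balance claim in the presence of repeated vertices along a circuit: one must check that every occurrence of a prime $p$ as some $v_i$ really contributes one odd-indexed and one even-indexed endpoint, independent of where else $p$ appears along $C$. This reduces to the observation that the two edges incident to any fixed position $v_i$ carry consecutive indices mod $2k$ and therefore opposite parities, so no combinatorial cancellation can spoil the balance. Once this is verified, the rest of the argument is a clean pigeonhole on the small-prime valuation profiles.
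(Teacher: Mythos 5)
Your proposal is correct and follows essentially the same argument as the paper: split each even circuit into its two alternating halves, observe the half-products share the same valuations at all medium and large primes, and apply Proposition \ref{smallcontrol} to the resulting $2^n$ subsets to get $2^n \leq \exp(O(N^{1/3+o(1)}))$. Your explicit verification of the parity balance at repeated vertices of a circuit is a point the paper glosses over (it writes the proof as if each $C_k$ were a cycle), so that extra care is welcome but does not change the substance.
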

\begin{proof} For each $k\in[n]$, let $\{v_1^k,\dots,v_{|C_k|}^k\}$ be the set of vertices in the cycle $C_k$ indexed so that $\{\{v_1^k,v_2^k\},\{v_2^k,v_3^k\},\dots,\{v_{|C_k|}^k,v_1^k\}\}=C_k$. Let $A_0^k$ be the subset of $A$ corresponding to the edges $\{v_1^k,v_2^k\},\{v_3^k,v_4^k\},\dots,\{v_{|C_{k-1}|}^k, v_{|C_k|}^k\}$, and let $A_1^k$ be the subset of $A$ corresponding to all the other edges in $C_k$. Then 
\[(\Vlarge\times\Vmed)\left(\prod_{a\in A_{0}^k}a\right) = (\Vlarge\times\Vmed)\left(\prod_{a\in A_{1}^k}a\right).\]
Thus, for any choice of $\mathbf{\epsilon}\in \{0,1\}^n$,
\[\prod_{k=1}^n\prod_{a\in A_{\epsilon_k}^k}a\]
has the same valuation at all large and medium primes. By Proposition \ref{smallcontrol}, we thus have $2^n =\exp(O(N^{1/3}))$, as desired.
\end{proof}

\begin{example}
Suppose $A = \{15,55,84,154,221,247,323,551,437,667\}$, $\Psmall = \{2\}$, and $\Pmed\cup \Plarge = \{3,5,7,11,13,17,19,23,29$\}. Below is the prime factorization graph of $A$, containing two even circuits. The product of the elements on the blue edges and the product of the elements on the red edges have the same valuations at all large and medium primes. By swapping the red-blue labeling in either circuit, we can produce four subsets whose products have the same valuations at all large and medium primes.

    \centering
    
\begin{tikzpicture}
    \begin{scope}[every node/.style={circle,thick,draw}]
    \node (A) at (0,0) {3};
    \node (B) at (0,3) {5};
    \node (C) at (3,0) {7};
    \node (D) at (3,3) {11};

    \node (E) at (6,0) {13};
    \node (F) at (6,3) {17};
    \node (G) at (8.5, 1.5) {19};
    \node (H) at (11,0) {23};
    \node (I) at (11,3) {29};
\end{scope}
\begin{scope}[every node/.style={fill=white,circle},
              every edge/.style={draw=red,very thick}]
    \path  (A) edge node {$15$} (B);
    \path  (D) edge node {$154$} (C);
    \path (E) edge node {$221$} (F);
    \path (G) edge node {$551$} (I);
    \path (G) edge node {$437$} (H);
\end{scope}
\begin{scope}[every node/.style={fill=white,circle},
              every edge/.style={draw=blue,very thick}]
    \path (A) edge node {$84$} (C);
    \path (B) edge node {$55$} (D);
    \path (G) edge node {$247$} (E);
    \path (G) edge node {$323$} (F);
    \path (H) edge node {$667$} (I);
\end{scope}
\end{tikzpicture}

\end{example}

\begin{corollary}\label{removeeven} By removing $O(N^{5/12})$ edges from $G$, we may assume that $G$ has no even circuits of length at most $2N^{1/12}$.
\end{corollary}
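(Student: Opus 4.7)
The plan is to derive the corollary directly from the preceding lemma by a greedy/maximality argument. First I would invoke the lemma with its maximal edge-disjoint collection $C_1, \dots, C_n$ of even circuits of length at most $2N^{1/12}$ in $G$, for which the total edge count $\sum_{k=1}^n |C_k|$ is bounded by $O(N^{5/12+o(1)})$. Define $G'$ to be the subgraph of $G$ obtained by deleting every edge that appears in some $C_k$. By the above estimate, this deletes $O(N^{5/12+o(1)})$ edges in total, which matches the bound claimed in the corollary.

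Next I would verify that $G'$ contains no even circuit of length at most $2N^{1/12}$. Suppose for contradiction that $C$ is such a circuit in $G'$. Since every edge of $C$ lies in $G'$, none of the edges of $C$ belongs to any $C_k$; equivalently, $C$ is edge-disjoint from $C_1, \dots, C_n$. But then $C_1, \dots, C_n, C$ would be a strictly larger edge-disjoint collection of even circuits of length at most $2N^{1/12}$ in $G$, contradicting the maximality in the lemma. Hence no such $C$ exists.

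I do not expect a substantive obstacle here: the lemma has already done the real work of bounding the total edge count via the distinct-subset-products hypothesis together with Proposition \ref{smallcontrol}, and the corollary is essentially a bookkeeping step. The only thing to be careful about is that edge deletion cannot create new circuits (a circuit in $G'$ is automatically a circuit in $G$ on the surviving edges), so the maximality argument applies cleanly, and the bound $O(N^{5/12+o(1)})$ on deleted edges is exactly what the lemma supplies.
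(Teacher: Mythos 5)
Your proposal is correct and is essentially identical to the paper's own proof: both take the maximal edge-disjoint collection from the preceding lemma, delete all $O(N^{5/12+o(1)})$ edges it contains, and note that any surviving short even circuit would be edge-disjoint from the collection and hence contradict maximality. No gaps; this is the intended bookkeeping step.
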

\begin{proof} Let $C_1,\dots,C_n$ be the sets of edges in a maximal collection of edge-disjoint even circuits in $G$, each with length at most $2N^{1/12}$. We then have $\sum_{k=1}^n|C_k| = O(N^{5/12})$, so we may remove all the edges from $\bigcup_{k=1}^nC_k$ from $G$. After doing so, if there remains an even circuit of length at most $2N^{1/12}$ in $G$, this would contradict the maximality of $\{C_1,\dots,C_n\}$. 
\end{proof}

From here forward, we will assume that $G$ contains no even circuits of length at most $2N^{1/12}$. Having removed short even circuits from $G$, we now turn our focus to cycles with odd length. We first have the following proposition for a general graph $G$.

\begin{proposition}\label{disj} Let $G$ be a graph with no even circuits of length at most $2M$. Then any two odd cycles in $G$ of length at most $M$ are vertex-disjoint.
\end{proposition}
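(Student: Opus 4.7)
The plan is to prove the contrapositive: suppose $C_1, C_2$ are two distinct odd cycles in $G$, each of length at most $M$, with $V(C_1)\cap V(C_2)\neq\emptyset$, and construct an even circuit in $G$ of length at most $2M$.

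First consider the subcase $|V(C_1)\cap V(C_2)|=1$, say $V(C_1)\cap V(C_2)=\{v\}$. Then $C_1$ and $C_2$ can share no edge, since any common edge would force both of its endpoints into the intersection. Concatenating the two cycles at $v$ -- traverse $C_1$ from $v$ back to $v$, then $C_2$ from $v$ back to $v$ -- produces a closed walk with pairwise distinct edges, that is, a circuit of length $|C_1|+|C_2|\leq 2M$; this length is even because both $|C_1|$ and $|C_2|$ are odd.

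In the remaining subcase $|V(C_1)\cap V(C_2)|\geq 2$, I claim that $H := C_1 \cup C_2$ is $2$-connected. Indeed, deleting any single vertex $u$ turns each $C_i$ into either an intact cycle (if $u\notin V(C_i)$) or a path (if $u\in V(C_i)$), and since $|V(C_1)\cap V(C_2)\setminus\{u\}|\geq 1$, the two (possibly modified) subgraphs are still glued at a common vertex, so $H-u$ stays connected. A $2$-connected graph that contains two distinct cycles must also contain a theta subgraph -- two vertices joined by three internally vertex-disjoint paths of lengths $\ell_1, \ell_2, \ell_3 \geq 1$ (take an ear decomposition of $H$ starting from $C_1$; since $H \neq C_1$, at least one ear exists, and together with $C_1$ it forms the required theta). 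The three simple cycles inside this theta have lengths $\ell_1+\ell_2$, $\ell_2+\ell_3$, $\ell_1+\ell_3$, which sum to the even number $2(\ell_1+\ell_2+\ell_3)$, so at least one of them has even length. That even simple cycle is a circuit in $G$ of length at most $\ell_1+\ell_2+\ell_3 - 1 \leq |E(H)| - 1 \leq |C_1|+|C_2| - 1 < 2M$, completing the contradiction.

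The main obstacle is cleanly justifying the $2$-connectedness of $H$ and the existence of the theta subgraph. Both are classical, but care is needed to ensure that the internal vertex-disjointness of the three paths in the theta makes the chosen even cycle a genuine simple cycle (hence a circuit) in $G$, and that the edge-count bound $|E(H)|\leq |C_1|+|C_2|$ yields the length bound by $2M$.
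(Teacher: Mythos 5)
Your proof is correct, but it takes a genuinely different route from the paper's. The paper also argues by contrapositive, but splits on whether the two cycles share an edge. When they are edge-disjoint (which includes, but is more general than, your ``share exactly one vertex'' subcase), it concatenates them at a common vertex to get an even circuit of length $n+m$. When they share an edge, it cyclically relabels so that $C_1$ and $C_2$ agree on the edge $\{v_1,v_n\}=\{w_1,w_m\}$ but disagree on the edge leaving $v_1$ in the other direction, takes the longest initial segment $v_1,\dots,v_j$ of $C_1$ using no $E_2$-edges (so $v_j$ lies on $C_2$), and glues that segment to whichever of the two $C_2$-arcs from $v_1$ to $v_j$ has the right parity; the resulting circuit is even and has length at most $n+m\le 2M$. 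Your proof instead splits on whether the intersection has one vertex or at least two. The single-vertex subcase matches the paper's concatenation idea. For $|V(C_1)\cap V(C_2)|\ge 2$, you show $H=C_1\cup C_2$ is $2$-connected, invoke the ear-decomposition theorem to extract a theta subgraph of $H$ on top of $C_1$, and then use the parity observation that the three cycles in a theta have lengths summing to an even number, so one is even; bounding by $|E(H)|-1\le |C_1|+|C_2|-1< 2M$ finishes. Both arguments are sound. The paper's is self-contained and elementary (no appeal to $2$-connectedness or ear decompositions), while yours replaces the somewhat fiddly path-chasing of the paper's second case with a clean structural fact (existence of a theta in any $2$-connected graph properly containing a cycle) plus a short parity count. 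The only thing worth tightening in your write-up is to note explicitly that the ear has two \emph{distinct} endpoints on $C_1$ (which is guaranteed in a $2$-connected graph), as that is what makes $C_1\cup P$ a genuine theta with three internally disjoint paths.
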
 
\begin{proof} We will prove the contrapositive. Suppose $G$ contains two odd cycles with vertex sets  $V_1 = \{v_1,\dots,v_n\}$ and $V_2=\{w_1,\dots,w_m\}$, respectively, such that $v_1=w_1$. Supposing also that $n+m\leq 2M$, we will show that $G$ has an even circuit of length at most $2M$.  Let $E_1 = \{\{v_1,v_2\},\dots,\{v_n,v_1\}\}$ and $E_2 = \{\{w_1,w_2\},\dots,\{w_m,w_1\}\}$ be their respective edge sets. We consider two cases.

First, assume $E_1\cap E_2=\emptyset.$ Then $\{\{v_1,v_2\},\dots,\{v_n,v_1\},\{w_1,w_2\},\dots,\{w_m,w_1\}\}$ form a circuit of length $n+m$, which is even and at most $2M$.

On the other hand, suppose $E_1\cap E_2\neq \emptyset$. Since $E_1\neq E_2$, we may assume without loss of generality that $v_1=w_1$, $\{v_1,v_n\}=\{w_1,w_m\}$, but $\{v_1,v_2\}\neq \{w_1,w_2\}$. Let $j$ be the maximal index such that for all $1\leq i\leq j-1$, $\{v_i,v_{i+1}\}\notin E_2$. Let $P_1$ and $P_2$ be the edge sets from the two paths in $E_2$ from $v_1$ to $v_j$. Since $|P_1|+|P_2|$ is odd, we can combine one of $P_1$ or $P_2$ with the edges from $\{\{v_1,v_2\},\dots,\{v_{j-1},v_j\}\}$ to form an even circuit of length at most $n+m\leq 2M$. \end{proof}

\begin{lemma}\label{removeodd1} Let $C_1,\dots,C_n$ be the sets of edges in a maximal collection of edge-disjoint odd circuits in $G$, each with length at most $N^{1/12}$ and each having a vertex from $\Psq$. Then $n = O(N^{1/3})$, so $\sum_{k=1}^n|C_k| = O(N^{5/12}).$
\end{lemma}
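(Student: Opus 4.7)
The plan is to mimic Lemma 3.1 (the even-circuit argument), replacing the parity $2$-coloring with a variant that accounts for the single vertex of an odd cycle at which the two parity classes disagree. In an odd cycle of length $\ell$, a $2$-coloring of edges by walk-parity produces two sets of sizes $(\ell\pm1)/2$; at one distinguished vertex the odd-parity class contributes valuation $2$ while the even-parity class contributes $0$, with all other vertices balanced. The $\Psq$-hypothesis supplies exactly the needed correction: if $p\in\Psq$ is that distinguished vertex, then there is an element $a\in A$ with $p^{2}\mid a$, and since $p>N^{1/3}$ one has $(\Vlarge\times\Vmed)(a)$ equal to $2$ at $p$ and $0$ elsewhere.

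My first step would be to show that every odd circuit in $G$ of length at most $N^{1/12}$ is in fact a simple odd cycle. A circuit that is not a cycle decomposes into edge-disjoint simple sub-cycles whose lengths sum to its length; for an odd circuit of length at most $N^{1/12}$ at least one sub-cycle is odd, and since $G$ contains no even circuits of length at most $2N^{1/12}$ (Corollary \ref{removeeven}), every sub-cycle in the decomposition is odd and of length at most $N^{1/12}$. But consecutive sub-cycles share vertices at splitting points, contradicting Proposition \ref{disj}. Hence the decomposition consists of a single cycle.

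Next, to each odd cycle $C_k$ I would attach a $\Psq$-vertex $p^{(k)}$ and an element $a^{(k)}\in A$ with $(p^{(k)})^{2}\mid a^{(k)}$, index the cycle as $v_1^k=p^{(k)},v_2^k,\dots,v_{\ell_k}^k$ in walk order, and define $A_0^k$ to be the $A$-elements corresponding to the even-indexed edges of $C_k$ together with $a^{(k)}$, and $A_1^k$ those corresponding to the odd-indexed edges. A direct valuation count at each vertex then gives that $\prod_{a\in A_0^k}a$ and $\prod_{a\in A_1^k}a$ have identical medium and large prime valuations.

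Finally, Proposition \ref{disj} guarantees the $C_k$ are pairwise vertex-disjoint, so the $p^{(k)}$ are distinct, the $a^{(k)}$ are distinct (by injectivity of $\Vlarge\times\Vmed$ on $A$), and no $a^{(k)}$ coincides with an edge-element of any $C_j$. Consequently, as $\epsilon$ ranges over $\{0,1\}^n$ the subsets $\bigcup_k A_{\epsilon_k}^k\subset A$ are distinct, and their products all share a common medium/large valuation vector. Proposition \ref{smallcontrol} then forces $2^n\leq\exp(O(N^{1/3+o(1)}))$, yielding $n=O(N^{1/3+o(1)})$ and $\sum_{k=1}^n|C_k|\leq n\cdot N^{1/12}=O(N^{5/12+o(1)})$. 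I expect the reduction of odd circuits to simple cycles to be the main obstacle, since the parity-coloring identity is sensitive to vertex repetitions in the walk and must be ruled out before the relation can be written cleanly.
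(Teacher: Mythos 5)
Your proposal is correct and follows essentially the same route as the paper: alternate the edges of each odd cycle around its $\Psq$-vertex $p$, use the element $a$ with $p^2\mid a$ (whose medium/large valuation vector is $2$ at $p$ and $0$ elsewhere) to balance the deficient class, invoke Proposition \ref{disj} for disjointness, and conclude via Proposition \ref{smallcontrol} that $2^n\leq\exp(O(N^{1/3+o(1)}))$. Your preliminary reduction showing that short odd circuits must in fact be simple cycles is a small clarification the paper leaves implicit, not a different argument.
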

\begin{proof} For each $k\in [n]$, let $\{v_1^k,\dots,v_{|C_k|}^k\}$ be the set of vertices in the cycle $C_k$ indexed so that $v_1^k\in \Psq$ and $\{\{v_1^k,v_2^k\},\{v_2^k,v_3^k\},\dots,\{v_{|C_k|}^k,v_1^k\}\}=C_k$. For each $k$, let $p_k$ be the prime corresponding to $v_1^k$, and let $a_k$ be the element of $A$ such that $p_k^2$ divides $a_k$. Note that by Proposition \ref{disj}, the elements $a_k$ are distinct for distinct $k$. Let $A_0^k$ be the set of elements of $A$ corresponding to the edges $\{v_1^k,v_2^k\},\{v_3^k,v_4^k\},\dots,\{v_{|C_k|-1}^k,v_{|C_k|^k}\}$, and let $A_1^k$ be the set of elements of $A$ corresponding to the other edges in $C_k$, union $\{a_k\}$. 

Then 
\[(\Vlarge\times\Vmed)\left(\prod_{a\in A_{0}^k}a\right) = (\Vlarge\times\Vmed)\left(\prod_{a\in A_{1}^k}a\right).\]
Thus, for any choice of $\mathbf{\epsilon}\in \{0,1\}^n$,
\[\prod_{k=1}^n\prod_{a\in A_{\epsilon_k}^k}a\]
has the same valuation at all large and medium primes. By Proposition \ref{smallcontrol}, we thus have $2^n =\exp(O(N^{1/3}))$, as desired.
\end{proof}

By a similar argument to Corollary \ref{removeeven}, by removing $O(N^{5/12})$ edges, we may assume without loss of generality that $G$ has no odd cycles of length at most $N^{1/12}$ with a vertex from $\Psq$.

\begin{example}

Suppose $A = \{15,65,84,154,143,9\}$, $\Psmall=\{2\}$, $\Pmed\cup\Plarge = \{3,5,7,11,13\}$, and $\Psq = \{3\}$. Below is the prime factorization graph of $A$. The product of the red elements (including $9$) and the product of the blue elements have the same valuations at all large and medium primes.
    
    \centering

\begin{tikzpicture}
    \begin{scope}[every node/.style={circle,thick,draw}]
    \node (A) at (0,0) {11};
    \node (B) at (-1,3) {7};
    \node (C) at (3,0) {13};
    \node (D) at (4,3) {5};
    \node(E) at (1.5,5) {3};     
\end{scope}

\node[style={rectangle,thick,draw,red}] (sq) at (1.5,6) {\color{black}{9}};

\begin{scope}[every node/.style={fill=white,circle},
              every edge/.style={draw=red,very thick}]
    \path  (A) edge node {$154$} (B);
    \path  (D) edge node {$65$} (C);

\end{scope}
\begin{scope}[every node/.style={fill=white,circle},
              every edge/.style={draw=blue,very thick}]
    \path (A) edge node {$143$} (C);
    \path (B) edge node {$84$} (E);
    \path (E) edge node {$15$} (D);
\end{scope}
\end{tikzpicture}

\end{example}

\begin{lemma}\label{removeodd2} Let $E$ denote the edge set of $G$. There is a set of edges $E'\subset E$ of size at most $\frac{1}{2}(|\Pno|+1)$ such that the graph with edge set $E\setminus E'$ contains no cycles of length at most $N^{1/12}$. 
\end{lemma}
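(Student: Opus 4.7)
The plan is to combine structural observations about $G$ with the vertex-disjointness result of Proposition \ref{disj}. By Corollary \ref{removeeven} and the discussion following Lemma \ref{removeodd1}, I may assume that $G$ has no even circuits of length at most $2N^{1/12}$ and no odd cycles of length at most $N^{1/12}$ passing through a vertex of $\Psq$. Applying Proposition \ref{disj} with $M = N^{1/12}$, the collection $\mathcal{C}$ of \emph{all} odd cycles in $G$ of length at most $N^{1/12}$ is then pairwise vertex-disjoint. I would let $E'$ consist of one edge chosen from each cycle in $\mathcal{C}$; it remains to bound $|\mathcal{C}|$ and to verify that $G - E'$ contains no cycle of length at most $N^{1/12}$.

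The key structural observation is that no edge of $G$ joins two large primes: such an edge would correspond to an element of $A$ divisible by two primes each greater than $N^{1/2}$, whose product already exceeds $N$. Hence $\Plarge$ is an independent set in $G$. Consequently, any cycle $C \in \mathcal{C}$ of length $\ell \geq 3$ has at least $\lceil \ell / 2 \rceil \geq 2$ vertices lying outside $\Plarge$, i.e., in $\Pmed \cup \{1\}$. Since $C$ avoids $\Psq$ by assumption, these vertices actually lie in $\Pno \cup \{1\}$. Because the cycles in $\mathcal{C}$ are pairwise vertex-disjoint, summing over $C \in \mathcal{C}$ yields $2|\mathcal{C}| \leq |\Pno| + 1$, and so $|E'| = |\mathcal{C}| \leq \tfrac{1}{2}(|\Pno| + 1)$.

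To finish, any cycle of length at most $N^{1/12}$ in $G - E'$ is also a cycle in $G$. It cannot be even, since an even cycle is an even circuit, and $G$ has no even circuits of length at most $2N^{1/12}$. If it is odd, it belongs to $\mathcal{C}$, so $E'$ contains one of its edges, contradicting its survival in $G - E'$. The main subtlety here is not a difficult estimate but rather the clean observation that $\Plarge$ is independent: once that is noted, the factor of $\tfrac{1}{2}$ in the bound is forced by the counting, and the vertex-disjointness of $\mathcal{C}$ (inherited from Proposition \ref{disj}) is exactly what makes this counting legitimate.
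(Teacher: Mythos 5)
Your proof is correct and follows essentially the same route as the paper: invoke the standing assumptions (no short even circuits, no short odd cycles through $\Psq$), use Proposition \ref{disj} to get vertex-disjointness, observe that $\Plarge$ is independent so each short cycle has at least two vertices in $\Pno\cup\{1\}$, and delete one edge per cycle. Your explicit check that every surviving short cycle must be odd (hence already handled) is a small point the paper leaves implicit, but the argument is the same.
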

\begin{proof} Let $C_1,\dots,C_n$ be the sets of edges of all cycles of length at most $N^{1/12}$ in $G$. By Proposition \ref{disj}, we may assume that these sets are disjoint and that they share no vertices. None of these cycles have a vertex from $\Psq$, and no vertices in $\Plarge$ are adjacent, so each $C_k$ contains an edge between two vertices from $\Pno\cup\{1\}$. Since the vertex sets are disjoint, we thus have $2n\leq |\Pno\cup\{1\}|$. We may take $E'$ to consist of one edge from each cycle $C_k$. 
\end{proof}

\section{Proof of Theorems \ref{mainthm} and \ref{squarefree}}
Having removed all cycles of length at most $N^{1/12}$ from the prime factorization graph, we are ready to estimate the number of edges in it. We first record the following general estimate. This can be deduced from a result of Alon, Hoory, and Linial \cite{AHL}, but it is easier in our case, so we present a self-contained proof. The proof uses a standard breadth-first search argument; a similar argument can be found in \cite{Z}, Theorem 1.6.5, for example.

\begin{lemma}\label{edgecount} Let $G$ be a graph with $n$ vertices and at least $(1+c)n$ edges. Then $G$ has a cycle of length at most $\frac{2(c+1)}{c}(\log_2(n)+1)$.
\end{lemma}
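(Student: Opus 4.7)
I plan to prove Lemma~\ref{edgecount} by a standard BFS argument. The idea is to first pass to a subgraph with large minimum degree and then show that, if the girth of that subgraph were too large, a BFS ball from any vertex would exceed $n$ vertices, giving a contradiction.

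First, I would iteratively remove vertices of degree $\leq d$ from $G$, where $d$ is the largest integer with $d < 1+c$. Each such removal discards at most $d$ edges, so the total deletion is at most $dn < (1+c)n = |E(G)|$, meaning we cannot exhaust all vertices. The resulting nonempty subgraph $H$ has $|V(H)| \leq n$ and minimum degree $\delta \geq d + 1 = \lceil 1+c \rceil \geq 2$.

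Next, I would assume for contradiction that $G$ has no cycle of length $\leq L := \frac{2(c+1)}{c}(\log_2 n + 1)$. Then the girth of $H$ also exceeds $L$. Running BFS from any $v \in V(H)$ and taking $r = \lfloor L/2 \rfloor$, the ball $B_r(v)$ is a tree (no cycle of length $\leq 2r+1 \leq L$ can fit), so each vertex at distance $i < r$ from $v$ has $\geq \delta - 1$ distinct children in $L_{i+1}$. This gives $|L_i| \geq \delta(\delta-1)^{i-1}$ and $|B_r(v)| \geq 1 + \delta \cdot \frac{(\delta-1)^r - 1}{\delta - 2}$ when $\delta \geq 3$, which would contradict $|B_r(v)| \leq n$ once $r \geq \log_{\delta-1}(n) + O(1)$.

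The hard part will be matching the constant factor $\frac{2(c+1)}{c}$ in the target. When $c \geq 2$, we have $\delta \geq 3$ and the above Moore-bound calculation directly gives girth $\leq 2\log_{\delta-1}(n) + O(1) \leq L$. When $c < 2$, only $\delta = 2$ is guaranteed and the pure min-degree bound becomes vacuous; one must then exploit the ``excess degree'' $\sum_u(\deg_H(u) - 2) = 2(|E(H)| - |V(H)|) \geq 2c|V(H)|$ of the subgraph, distributing it across BFS layers. I anticipate that averaging the layer-growth recurrence $|L_{i+1}| = |L_i| + \sum_{u \in L_i}(\deg_H(u)-2)$ over BFS roots (or over layers via pigeonhole) yields effective growth at rate $\approx 1 + 2c$, which after taking logarithms matches the target bound.
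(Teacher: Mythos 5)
Your proposal matches the paper at the setup stage (pass to a dense subgraph, then run BFS and argue exponential layer growth), but the core of your argument is incomplete in precisely the regime the paper needs, and the paper's actual mechanism for forcing layer growth is different from yours.

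Your min-degree reduction yields $\delta \geq \lceil 1+c\rceil$, which is $\geq 3$ only when $c\geq 2$. In the paper this lemma is invoked with $c = \frac{4\log_2 N}{N^{1/12}} = o(1)$, so the only case that matters is exactly the case $\delta = 2$ in which your Moore-bound growth estimate $(\delta-1)^r$ is vacuous. You correctly flag this, but the remaining content of your proposal is the sentence ``I anticipate that averaging the layer-growth recurrence \dots yields effective growth at rate $\approx 1+2c$,'' which is not a proof. That averaging step is essentially the content of the Alon--Hoory--Linial theorem on girth versus average degree; it requires a genuine argument (the excess degree $\sum_u(\deg u - 2)$ may be concentrated on a few vertices far from the BFS root, so a single BFS layer can fail to grow for many steps, and one has to average over roots or count nonbacktracking walks to fix this). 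The paper explicitly notes it is \emph{avoiding} the AHL route because a simpler self-contained argument suffices here. So the gap is real: as written, your argument proves the lemma only for $c\geq 2$.

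The paper's trick, which you should compare against, is to strengthen the preprocessing rather than the degree bound. Instead of pushing for min degree $3$, it removes degree-$1$ vertices (so min degree $\geq 2$) \emph{and} contracts away any path of length $> \frac{c+1}{c}$ whose internal vertices all have degree $2$; such a contraction deletes $k$ edges and $k-1$ vertices with $k \geq \frac{c+1}{c}$, which preserves $|E| \geq (1+c)|V|$. After this, in the BFS from $v_0$ (assuming no short cycle has yet appeared), every vertex $v\in V_k$ must have at least two distinct descendants in $V_\ell$ whenever $\ell - k > \frac{c+1}{c}$: if $v$ had only one descendant $w$ at level $\ell$, the unique BFS path from $v$ to $w$ would be a path of length $>\frac{c+1}{c}$ all of whose internal vertices have degree $2$, contradicting the preprocessing. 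This gives $|V_\ell|\geq 2|V_k|$, i.e.\ a doubling every $\approx \frac{c+1}{c}$ levels, which yields the stated bound uniformly in $c>0$. This is the idea your proposal is missing.
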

\begin{proof} Without loss of generality, we may assume that $G$ is connected, since we may pass to a component satisfying $|V|\geq (1+c)|E|$. We can remove degree $1$ vertices and their incident edges while preserving the inequality $|V|\geq (1+c)|E|$, so we may assume without loss of generality that each vertex has degree at least 2. If $G$ contains a path of length greater than $\frac{c+1}{c}$, then we can remove all internal edges and vertices from that path while preserving the inequality $|V|\geq (1+c)|E|$, so we may assume that $G$ has no such paths. 

With all these assumptions set, we fix a vertex $v_0\in V$ and consider a breadth-first search starting at $v_0$. For each $k\in\mathbb{N}$, let $V_k = \{v\in V:\text{ the shortest path from $v$ to $v_0$ has length $k$}\}$. Fix $k\in\mathbb{N}$ with $1\leq k\leq \frac{c+1}{c}(\log_2(n)-1)$ and let $\ell$ be the least integer greater than $k+\frac{c+1}{c}$. 

If there is a $v\in v_\ell$ such that there are two distinct paths of length $\ell$ from $v$ to $v_0$, then we are done, since $v$ must thus contain a cycle of length at most $2\ell$. Otherwise, for each $v\in v_\ell$, let $\phi(v_\ell)\in V_k$ be the vertex from $V_k$ on the shortest path from $v_\ell$ to $v_0$. For each $v\in V_k$, we must have $|\phi^{-1}(v)|\geq 2$, since otherwise the path from $v$ to the only element in $\phi^{-1}(v)$ would be a path of length greater than $\frac{c+1}{c}$ consisting only of degree two vertices. We thus have $|V_\ell|\geq 2|V_k|$. Letting $m$ be the least integer greater than $\frac{c+1}{c}\log_2(n)$, we must thus have $|V_m|>2^{\log_2(n)}=|V|$, so there must be a $v\in V_m$ with at least two distinct paths from $v$ to $v_0$. There is thus a cycle of length at most $2m$ in $G$, as desired.
\end{proof}

We will apply Lemma \ref{edgecount} with $c = O(N^{-1/12+o(1)}).$ The following fact is required to ensure that the error term from Lemma \ref{edgecount} is multiplied by $\pi(N^{1/2})$ instead of $\pi(N)$ in our final estimate.
\begin{lemma}\label{boringcontrol} Let $G$ be the prime factorization graph of $A$, containing no even circuits of length at most $2N^{1/12}$. The set of vertices from $\Plarge$ with degree at least $2$ has size at most $\pi(N^{1/2}) + O(N^{5/12})$.
\end{lemma}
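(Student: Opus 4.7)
The plan is to pass to a bipartite auxiliary graph and then invoke Lemma \ref{edgecount}. Let $X \subset \Plarge$ be the set of large primes whose vertex in $G$ has degree at least $2$, and set $Y := \Pmed \cup \{1\}$; the goal is $|X| \leq (1+O(N^{-1/12+o(1)}))\pi(N^{1/2})$. The key structural point is that no two vertices of $\Plarge$ can be adjacent in $G$: if $p,p' \in \Plarge$ were joined by an edge, then some element of $A$ would be divisible by $pp' > N$, which is impossible. Hence every $G$-neighbor of every $x \in X$ lies in $Y$.

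Now let $H$ be the bipartite graph with parts $X,Y$ whose edges are precisely the $G$-edges between these sets. By the observation above, every $G$-edge incident to $X$ is already in $H$, so $d_H(x) = d_G(x) \geq 2$ for each $x \in X$, giving $|E(H)| \geq 2|X|$. Because $H$ is bipartite, every cycle in $H$ is of even length, and any such cycle is in particular an even circuit of $G$; by hypothesis $G$ has no even circuit of length at most $2N^{1/12}$, so neither does $H$. Applying Lemma \ref{edgecount} with $n := |V(H)| = |X|+|Y|$, an inequality $|E(H)| \geq (1+c)n$ would force a cycle of length at most $\frac{2(c+1)}{c}(\log_2 n + 1)$ in $H$. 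Requiring this quantity to exceed $2N^{1/12}$ and using $\log_2 n = O(\log N)$ gives $c = O(N^{-1/12+o(1)})$, so
\[
|E(H)| \leq \bigl(1+O(N^{-1/12+o(1)})\bigr)(|X|+|Y|).
\]

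Combining $2|X| \leq |E(H)| \leq (1+O(N^{-1/12+o(1)}))(|X|+|Y|)$ and rearranging yields $|X| \leq (1+O(N^{-1/12+o(1)}))|Y|$. Since $|Y| = \pi(N^{1/2}) - \pi(N^{1/3}) + 1 \leq \pi(N^{1/2}) + 1$ and the error $N^{-1/12+o(1)}\pi(N^{1/2})$ absorbs any bounded additive correction, the claimed estimate follows. There is no serious obstacle in this argument; the one conceptual move worth flagging is the passage to the bipartite graph $H$, which converts the paper's ``no short even circuits'' hypothesis into an honest girth bound so that Lemma \ref{edgecount} can be applied as a black box, and it is crucial here that degree-$2$ vertices in $X$ force $|E(H)|$ to grow linearly in $|X|$ rather than merely in $|V(H)|$.
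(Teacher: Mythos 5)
Your argument is correct, and it reaches the bound by a somewhat different reduction than the paper's. The paper also starts from the observation that large primes are pairwise non-adjacent, but instead of working with the bipartite subgraph between $\mathcal{Q}$ and $\Pmed\cup\{1\}$ directly, it builds an auxiliary graph on the vertex set $\Pmed\cup\{1\}$ alone, joining $v_1$ to $v_2$ whenever there is a path of length $2$ in $G$ through a vertex of $\mathcal{Q}$; the absence of $4$-cycles makes this auxiliary graph simple with $|\mathcal{Q}|$ bounded by its edge count, cycles of length $m$ there lift to circuits of length $2m$ in $G$, and Lemma \ref{edgecount} applied to that graph on $|\Pmed|+1$ vertices finishes immediately. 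Your version applies Lemma \ref{edgecount} to the bipartite subgraph $H$ on $|X|+|Y|$ vertices and then extracts the bound on $|X|$ from the degree inequality $2|X|\leq |E(H)|\leq (1+c)(|X|+|Y|)$, which rearranges to $|X|\leq \frac{1+c}{1-c}|Y|=(1+O(c))|Y|$ since $c\to 0$; this is a perfectly valid alternative, and it has the mild advantage of not needing to verify that an auxiliary graph is simple or that its cycles correspond to circuits of $G$, at the cost of the extra algebraic rearrangement. One phrasing slip worth fixing: you say you require $\frac{2(c+1)}{c}(\log_2 n+1)$ to \emph{exceed} $2N^{1/12}$, but the contradiction requires the opposite inequality --- you must choose $c$ (e.g.\ $c=4\log_2(N)/N^{1/12}$, as the paper does) so that this quantity is \emph{at most} $2N^{1/12}$, whence the cycle guaranteed by Lemma \ref{edgecount} would be a forbidden short even circuit of $G$. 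With that inequality stated in the right direction, the proof is complete.
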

\begin{proof} Let $\mathcal{Q}\subset\Plarge$ be the set of vertices of degree at least $2$ in $G$. Recall that no pair of vertices in $\Plarge$ is connected by an edge, so if $p\in \mathcal{Q}$, then there are distinct $v_1,v_2\in \Vmed\cup \{1\}$ which are each connected to $p$ by an edge. 

Consider the graph with vertex set $\{1\}\cup\Pmed$, where $v_1$ is adjacent to $v_2$ if there is a path of length $2$ from $v_1$ to $v_2$ in $G$ with the middle vertex in $Q$. By construction, there is a surjection from the set of edges in this graph to $\mathcal{Q}$. Moreover, this graph is simple since $G$ contains no $4$-cycles.

Any cycle of length $m$ in this graph corresponds to a circuit of length $2m$ in $G$. This graph thus does not contain any cycles of length at most $N^{1/12}$. Applying Lemma \ref{edgecount} with $c = \frac{4\log_2(N)}{N^{1/12}}$, we thus find that the number of edges is at most $(1+O(N^{-1/12}\log(N)))(|\Pmed|+1)$, so $|\mathcal{Q}|\leq (1+O(N^{-1/12}\log(N)))\pi(N^{1/2})$, which is $\pi(N^{1/2}) + O(N^{5/12})$ by the prime number theorem. 
\end{proof}

We are now ready to prove Theorem \ref{mainestimate}, from which Theorems \ref{mainthm} and \ref{squarefree} follow immediately.
\begin{proof}[Proof of Theorem \ref{mainestimate}] Let $A\subset [N]$ have distinct subset products. By Corollary \ref{graphconstruct}, we may remove $O(N^{1/3})$ elements from $A$ to ensure that $\Vlarge\times\Vmed$ is injective and nonzero when restricted to $A$. We may now let $G = (V,E)$ be the prime factorization graph of $A$.

By Corollary \ref{removeeven} and Lemma \ref{removeodd1}, we may remove $O(N^{5/12})$ elements from $E$ to ensure that $A$ has no even circuits of length at most $2N^{1/12}$, and to ensure that $A$ has no odd cycles of length at most $N^{1/12}$ with a vertex from $\Psq$. Then, we may apply Lemma \ref{removeodd2} and remove $\frac{1}{2}|\Pno|+O(1)$ edges to remove all odd cycles of length at most $N^{1/12}$ from $G$. 

Let $\mathcal{Q}$ be the set of all vertices in $\Plarge$ with degree at least two. We remove all vertices from $\Plarge\setminus \mathcal{Q}$ and their incident edges. Let $V'$ and $E'$ be the remaining sets of vertices and edges after all these removals. We have 
\[|V'|\leq 1+\pi(N^{1/2})+|\mathcal{Q}|\]
and
\[|E'| \geq  |E| - (|\Plarge|-|\mathcal{Q}|) - \frac{1}{2}|\Pno| - O(N^{5/12}).\]
The graph $(V',E')$ has no cycles of length at most $N^{1/12}$. Applying Lemma \ref{edgecount} with $c = \frac{4\log_2(N)}{N^{1/12}}$, we find 
\[|E'| \leq (1+O(N^{-1/12}\log(N)))|V'|.\]
This results in the inequality
\[|E|-(|\Plarge|-|\mathcal{Q}|)  - \frac{1}{2}|\Pno| \leq (1+O(N^{-1/12}\log(N)))(\pi(N^{1/2})+|\mathcal{Q}|) + O(N^{5/12}) = \]\[\pi(N^{1/2})+|\mathcal{Q}|+O(N^{5/12}),\]
where we used Lemma \ref{boringcontrol} so that $O(N^{-1/12}\log(N))|\mathcal{Q}|=O(N^{5/12})$.

Finally, using the equation $|A|=|E|+|\Psq|+O(N^{1/3})$, we find \[|A|\leq 
\pi(N^{1/2}) + |\mathcal{Q}| + |\Plarge| - |\mathcal{Q}|+ \frac{1}{2}|\Pno|+|\Psq| + O(N^{5/12}) \]\[\leq  \pi(N)+\frac{1}{2}\pi(N^{1/2}) +\frac{1}{2}|\Psq| + O(N^{5/12}),\]
since $\Plarge = \pi(N)-\pi(N^{1/2})$ and $|\Psq|+|\Pno|\leq \pi(N^{1/2})$.
\end{proof}

\section{Examples}
In this section, we will discuss some examples of large sets with distinct subset products. 

Our graph-theoretic perspective enables us to produce some new examples of sets $A\subset [N]$ with distinct subset products and $|A| \approx  \pi(N)+\pi(N^{1/2})$.

\begin{example} Let $G$ be a tree with vertex set $\Pmed\cup \Psmall$. Let $A = \Plarge \cup \{p^2:p\in \Pmed\cup \Psmall\}\cup \{pq:p,q\in \Pmed\cup \Psmall\text{ are connected by an edge in $G$}\}$. Then $|A| = \pi(N)+\pi(N^{1/2})-1$ and $A$ has distinct subset products.
\end{example}

Next, we will produce an example of a set $|A|$ with distinct subset products and $|A|\geq \pi(N)+\pi(N^{1/2})+\frac{1}{3}\pi(N^{1/3})-O(1)$, establishing Theorem \ref{lowerbound}.
\begin{proof}[Proof of Theorem \ref{lowerbound}] Construct a set $A$ as follows. For any prime $p\in (N^{1/3},N]$, let $p\in A$. For any prime $p\in (N^{1/3},N^{1/2}]$, let $p^2\in A$. Let $p,q,r$ be distinct primes in $[N^{1/3}]$. The set $\{p^2q,p^2r,p^2,qr,p^3,q^3,r^3\}$ is a subset of $[N]$ with distinct subset products. We may thus divide the primes from $[N^{1/3}]$ into disjoint subsets of size $3$ (perhaps leaving one or two primes out) and for each triple $\{p,q,r\}$, add the $7$ elements $\{p^2q,p^2r,p^2,qr,p^3,q^3,r^3\}$ to $A$. The number of elements of $A$ is thus at least
\[\pi(N)-\pi(N^{1/2})+2(\pi(N^{1/2})-\pi(N^{1/3})) + 7\left(\frac{1}{3}\pi(N^{1/3})-O(1)\right) = \]\[\pi(N)+\pi(N^{1/2})+\frac{1}{3}\pi(N^{1/3})-O(1).\qedhere \]
\end{proof}

Finally, we may use our graph-theoretic perspective to produce a set of squarefree integers with distinct subset products and nearly $\pi(N)+\frac{1}{2}\pi(N^{1/2})$ elements, establishing Theorem \ref{lowerbound2}.
\begin{proof}[Proof of Theorem \ref{lowerbound2}]
 Let $\epsilon>0$, and let $N$ be sufficiently large (depending on $\epsilon$). By the prime number theorem, for every $k\in [1,1/\epsilon-2]$, the number of primes $p$ satisfying the inequality
\[N^{1/2}-(k+1)\epsilon N^{1/2}\leq p < N^{1/2}-k\epsilon N^{1/2} \]
is less than twice the number of primes $p$ satisfying
\[N^{1/2}+(k-1)\epsilon N^{1/2}< p \leq  N^{1/2}+k\epsilon N^{1/2}.\]
We may thus list all but at most $3\epsilon N^{1/2}$ of the primes in $[1,N^{1/2}]$ as $q_1,r_1,q_2,r_2,\dots,q_n,r_n$ so that there are distinct primes $p_1,\dots,p_n$ in $(N^{1/2},N]$ with $p_iq_i\leq N$, $p_ir_i\leq N$ for all $i$.

Let $\mathcal{Q}$ be the set of large primes not in $\{p_1,\dots,p_n\}$. Let \[A =\mathcal{Q}\cup \bigcup_{i=1}^n\{p_iq_i,p_ir_i,q_ir_i\} \cup \bigcup_{i=1}^n \{r_iq_{i+1}\}.\] Then $|A|\geq \pi(N)+\frac{1}{2}\pi(N^{1/2}) - 3\epsilon\pi(N^{1/2})-2$ and $A$ has distinct subset products.

The prime factorization graph of $A$ is as follows (omitting the vertices from $\mathcal{Q}$); it is constructed so that the argument from Lemma \ref{removeodd2} is sharp.
\begin{center}
    \begin{tikzpicture}
    \begin{scope}[every node/.style={circle,thick,draw}]
    \node (A) at (0,0) {$q_1$};
    \node (B) at (3,0) {$r_1$};
    \node (C) at (1.5,2.5) {$p_1$};
    \node (D) at (5,0) {$q_2$};
    \node (E) at (8,0) {$r_2$};
    \node (F) at (6.5,2.5) {$p_2$};
\end{scope}
\node (G) at (10,0) {$\dots$};

\begin{scope}[every node/.style={fill=white,circle},
              every edge/.style={draw=black,very thick}]
    \path  (A) edge (B);
    \path  (B) edge (C);
    \path (A) edge (C);
    \path (B) edge (D);
    \path (D) edge (E);
    \path (E) edge (F);
    \path (F) edge (D);
    \path (E) edge (G);
\end{scope}
\end{tikzpicture}
\end{center}

We have established that for all $\epsilon>0$, there is an $N_0\in\mathbb{N}$ such that for all $N\geq N_0$, $h(N)\geq \pi(N)+\frac{1}{2}\pi(N^{1/2})-3\epsilon\pi(N^{1/2})-2$. Thus, $h(N) \geq \pi(N)+\frac{1}{2}\pi(N^{1/2})-o(\pi(N^{1/2}))$. 
\end{proof}

\bibliographystyle{plain}
\bibliography{refs}

\end{document}